\documentclass[10pt]{article}
\usepackage{amsmath,amssymb,amsthm}
\usepackage{comment,enumerate}
\usepackage{hyperref}
\usepackage{tikz}
\tikzset{every picture/.style={thick}}
\tikzset{every node/.style={draw, circle, fill, inner sep = 2pt}}
\theoremstyle{plain}
\newtheorem{theorem}{Theorem}[section]
\newtheorem{lemma}{Lemma}[section]

\newtheorem{corollary}{Corollary}[section]

\newcommand{\1}{\mathbf{1}}
\begin{document}
\title{Logical aspects of isomorphism of controllable graphs and cospectrality of distance-regularized graphs}
\author{
Aida Abiad\thanks{\texttt{a.abiad.monge@tue.nl},  Department of Mathematics and Computer Science, Eindhoven University of Technology, The Netherlands}
\thanks{Department of Mathematics and Data Science of Vrije, Universiteit Brussel, Belgium} 
\and 
Anuj Dawar\thanks{\texttt{anuj.dawar@cl.cam.ac.uk}, Department of Computer Science and Technology, University of Cambridge, UK}
\and 
Octavio B. Zapata-Fonseca\thanks{\texttt{octavioz@ciencias.unam.mx},  Departamento de Matem\'aticas, Facultad de Ciencias, Universidad Nacional Aut\'onoma de M\'exico, M\'exico}
} 
\date{}
\maketitle
\begin{abstract} 
We consider isomorphism of controllable graphs and cospectrality of distance-regularized graphs (which are known to be distance-regular or distance-biregular) in relation to logical definability. While most characterizations of these equivalence relations for such graph classes are of algebraic and spectral flavor, here we inject tools from first-order logic, extending and unifying several existing results. 
\end{abstract}
\section{Introduction}
\label{sec:introduction}
 The classes of controllable and distance-regularized graphs are relevant objects in algebraic combinatorics, controllability and other fields. 
Most of the known characterizations of such graph classes are of algebraic and spectral flavor, see for instance \cite{DH1,G,L1,LS}. This paper presents the first study of such graph classes making use of first-order logic. 

Distance-regularized graphs were introduced by Godsil and Shawe-Taylor \cite{GS} as a common generalization of distance-regular graphs and  generalized polygons, and they showed that a distance-regularized graph must be either distance-regular or distance-biregular. Here we prove that cospectral distance-regularized graphs are indistinguishable by the counting three-variable first-order logic $C^3$. This result in fact also holds for the so-called pseudo-distance regularized graphs introduced by Fiol and Garriga \cite{FGnew,F2001}, since it is known that they are distance regular or distance-biregular \cite{F2012}. Our first main characterization extends a previous result of this type for strongly regular graphs by Dawar, Severini and Zapata \cite[Proposition 8]{DSZ}, as well as a result for distance-regular graphs by Man{\v{c}}inska,  Roberson and Varvitsiotis \cite[Theorem 7.6]{MRV}, to the more general class of distance-regularized graphs. 
 
Controllable graphs have also attracted a great deal of attention over the last decades. 
Controllable graphs were introduced by Godsil and Severini \cite{GSev} to address the  study continuous-time quantum walks on graphs. O'Rourke and  Touri \cite{OT} proved that almost all graphs are controllable.  
Here we prove that controllable graphs which are indistinguishable by the counting two-variable first-order logic $C^2$ 
are isomorphic, extending results from \cite{DSZ,F,MRV,RS}. 

This paper is structured as follows. 
In Section \ref{sec:preliminaries} we provide the needed definitions and preliminary results. 
In Section \ref{sec:control} we prove a characterization of graph isomorphism for the class of controllable graphs. 
In Section \ref{sec:distanceregularized} we show a characterization of cospectrality of distance-regularized graphs. 

\section{Preliminaries}
\label{sec:preliminaries}

\subsection{First-order logic of graphs}
\label{subsec:logic}
We work in the language of graphs, so there is one binary relation symbol $E$ interpreted as the adjacency relation on the set $X$ of vertices of a graph $\Gamma = (X, E)$. 
The language also contains variables $x,y,z,\dots$  for referring to arbitrary vertices of the graph. 
 We allow any of the usual logical symbols consisting of the standard connectives and quantifiers: $\lnot$ (`not'), $\vee$ (`or'), $\wedge$ (`and'), $\rightarrow$ (`implies'), $\leftrightarrow$ (`if and only if'), $\exists$ (`there exists') and $\forall$ (`for all'). 
Since we only consider undirected graphs without loops or multiple edges, the relation of  adjacency is  irreflexive and symmetric, and hence the sentences 
\begin{align*}
&\forall x \lnot E(x,x)\\
&\forall x \forall y (E(x,y) \leftrightarrow E(y,x))
\end{align*}
 are true in all of our graphs. 
 If the sentence $\phi$ is true in the graph $\Gamma$, then we say that $\Gamma$ \emph{satisfies} $\phi$ and we write $\Gamma \models \phi$. 
 
For any positive integer $k$, we denote by $C^k$ the fragment of first-order logic in which we only have $k$ distinct variables, but we allow counting quantifiers: so for each positive integer $i$, we have a quantifier $\exists^{\geq i}$. 
The semantics is defined so that $\exists^{\geq i}x\varphi$ is true in $\Gamma$ if there are at least $i$ distinct vertices that can be substituted for the variable $x$ to make the formula $\varphi$ true. 
We use the abbreviation $\exists^{=i}x\phi$ for the formula $\exists^{\geq i}x\phi \land \lnot \exists^{\geq i+1}x\phi $ that asserts the existence of exactly $i$ vertices satisfying $\phi$. 
For example, if $d$ is a non-negative integer, then $\Gamma \models \forall x \exists^{=d} y  E(x,y)$ if and only if $\Gamma$ is a $d$-regular graph. 
Thus, we say that the property of being $d$-regular is definable in the logic $C^2$.

We say that two graphs $\Gamma$ and $\Delta$ are $C^k$-\emph{equivalent} (and write $\Gamma \equiv_{C^k} \Delta$) if they satisfy exactly the same $C^k$-sentences: 
 $\Gamma\models \phi$ if and only if $\Delta\models \phi$ for all sentences $\phi$ of $C^k$.
If two graphs are $C^{k+1}$-equivalent for some $k\geq 2$, then (in particular) these two graphs are $C^{k}$-equivalent. 

\subsection{Iterated degree sequences} \label{subsec:C2andisomapprox}
Given a  graph $\Gamma = (X,E)$ and a vertex $x\in X$, let $d(x)$ be the number of vertices which are adjacent to $x$ in $\Gamma$.  
The \emph{degree sequence} of $\Gamma$ is the multiset  $d(\Gamma)=\{d(x)\mid x\in X\}$. 
Similarly, the \emph{iterated degree} of $x$ in $\Gamma$ is defined inductively by $d_0(x)=d(x)$ and $d_{r}(x)=\{d_{r-1}(y)\mid y\in \Gamma(x)\}$ for every $r >0$.
The \emph{iterated degree sequence} of $\Gamma$ is the multiset $D_\Gamma =\{d_r(\Gamma)\mid r\geq 0\}$ with $d_0(\Gamma)=d(\Gamma)$ and $d_{r}(\Gamma)=\{d_{r-1}(x)\mid x\in X\}$ for $r>0$.

The process of finding the iterated degree sequence of a graph has several widely adopted names; it is known as \emph{canonical labelling}, \emph{color refinement}, \emph{naive vertex classification} or \emph{1-dimensional Weisfeiler--Leman algorithm}. 
Immerman and Lander proved that $C^2$-equivalence  is a  necessary and sufficient condition for having the same iterated degree sequence (see \cite[Theorem 4.8.1]{IL}). 

\begin{theorem}[\cite{IL}]\label{thm:c2}
	Two graphs  are  $C^2$-equivalent if and only if they  have equal iterated degree sequences. 
\end{theorem}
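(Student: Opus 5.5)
We prove both directions of Theorem~\ref{thm:c2} through an intermediate object: the stable colouring produced by colour refinement. Define colours $c_r(x)$ by $c_0(x)=d(x)$ and $c_r(x)=(c_{r-1}(x),\{c_{r-1}(y)\mid y\in\Gamma(x)\})$. These refine the iterated degrees. Conversely, $c_r(x)$ is recoverable from $d_0(x),\dots,d_r(x)$, since $d_0(x)$ is the first component and the multisets nest inductively. So two graphs have equal iterated degree sequences exactly when, for every $r$, the multisets $\{c_r(x)\mid x\in X\}$ coincide. We may therefore work with the $c_r$ throughout.

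For the forward direction ($C^2$-equivalence implies equal sequences), we show by induction on $r$ that for each colour $\kappa$ there is a $C^2$ formula $\psi_\kappa(x)$, using only $x,y$, that holds exactly at vertices of colour $\kappa$ in any graph. For $r=0$, take $\exists^{=d}y\,E(x,y)$. For the inductive step, suppose $\kappa=(\kappa',M)$ and $M$ assigns multiplicity $m_\lambda$ to colour $\lambda$. Set
\[
\psi_\kappa(x)=\psi_{\kappa'}(x)\wedge\bigwedge_\lambda \exists^{=m_\lambda}y\,(E(x,y)\wedge\psi_\lambda(y)),
\]
where $\psi_\lambda(y)$ is obtained from $\psi_\lambda(x)$ by swapping the two variables. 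This variable swap is what keeps us within two variables. The conjunction is finite because we only need the colours actually occurring in $\Gamma$ and $\Delta$. The sentences $\exists^{=k}x\,\psi_\kappa(x)$ then transfer each multiplicity between the graphs.

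The converse is the main obstacle. We use the bijective $2$-pebble game, whose characterization of $C^2$-equivalence (Hella, Immerman--Lander) we would prove or cite. A direct alternative is induction on formulas: if $\Gamma$ and $\Delta$ have equal colour multisets at every level, then for every $C^2$ formula $\varphi(x,y)$ of quantifier depth $q$, its truth at $(a,b)$ depends only on $c_q(a)$, $c_q(b)$, and whether $a=b$ or $E(a,b)$. The same holds uniformly across the two graphs.

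The inductive step for $\exists^{\ge i}y\,\varphi(x,y)$ counts the $b$ satisfying $\varphi(a,b)$. By hypothesis this count depends only on how many vertices of each $c_{q}$-colour are neighbours of $a$, non-neighbours of $a$, or equal to $a$. Neighbour counts are read off from $c_{q+1}(a)$. Non-neighbour counts are the global multiplicity of that colour, minus the neighbour count, minus one if $a$ has that colour. The global multiplicities match between the graphs by assumption, and this is exactly where equality of the whole sequence $d_r(\Gamma)$, not just local data, is needed. Sentences then reduce to global counts of colours, which agree, giving $C^2$-equivalence.
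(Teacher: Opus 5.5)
The paper does not prove this statement. It quotes it from Immerman and Lander \cite{IL} and uses it only as a black box in Lemma~\ref{lem:c2}. Your proposal supplies a correct, self-contained proof of the standard kind. In the forward direction you show that each colour-refinement class is definable in $C^2$ by recycling $x$ and $y$. In the converse you replace the bijective pebble game by a direct induction on formulas. There, the only non-local quantity, the number of non-neighbours of $a$ of a given colour, is controlled exactly by equality of the global colour multisets. Your route costs some length but avoids the game characterization entirely, whereas the paper's citation buys brevity.

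One link should be stated more carefully. Equal iterated degree sequences only equate the multisets $\{d_s(x)\mid x\in X\}$ separately for each $s$. So knowing that $c_r(x)$ is recoverable from the tuple $(d_0(x),\dots,d_r(x))$ is not by itself enough to equate the multisets $\{c_r(x)\mid x\in X\}$. Likewise, rebuilding $\{c_{r-1}(y)\mid y\in\Gamma(x)\}$ from $d_1(x),\dots,d_r(x)$ needs each neighbour's data jointly, which those separate multisets do not provide.

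What closes this is that $d_s(y)$ determines $d_{s-1}(y)$. For $s=1$ this holds because $d_0(y)=|d_1(y)|$. For $s>1$ it follows by induction, applying the map $d_{s-1}(z)\mapsto d_{s-2}(z)$ elementwise to $d_s(y)=\{d_{s-1}(z)\mid z\in\Gamma(y)\}$. Hence $c_r(x)$ is a function of $d_r(x)$ alone, which is what your reduction needs.

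With this made explicit your argument is complete. A small syntactic point: write $\exists^{=0}y\,\varphi$ as $\lnot\exists y\,\varphi$, since the counting quantifiers here only have $i\geq 1$.
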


Two regular graphs with the same number of vertices and the same degree necessarily have the same iterated degree sequence. 
For example, if $\Gamma$ is the disjoint union of two triangles and $\Delta$ is the cycle of length 6, then both $\Gamma$ and $\Delta$ have $6$ vertices and degree $2$, and hence their iterated degree sequence is the same.  

\subsection{Fractional isomorphism}
Let $\Gamma = (X,E)$ be a finite graph. 
The \emph{adjacency matrix} of $\Gamma$ is the square matrix $A$ indexed by the vertex-set $X$, whose entries $A_{xy}$ are defined as $A_{xy} = 1$ if the vertices $x$ and $y$ are adjacent, and $A_{xy} = 0$ otherwise.

A real matrix $S$ is called \emph{doubly stochastic} if all its entries are non-negative and every row and every column sums to 1.  
The Birkhoff--von Neumann theorem says that the set of all $n\times n$ doubly stochastic matrices is a compact and convex set whose extreme points are the permutation matrices. 
Two graphs $\Gamma$ and $\Delta$ with adjacency matrices $A$ and $B$ are \emph{fractionally isomorphic} if there exists a doubly stochastic matrix $S$ such that $SA = BS$.   

\begin{theorem}[\cite{RSU}]\label{thm:degree}
	Two graphs are fractionally isomorphic if and only if they have the same iterated degree sequence. 
\end{theorem}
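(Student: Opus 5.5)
We prove the two directions of Theorem~\ref{thm:degree} separately. Both rest on the notion of an \emph{equitable partition}. Let $\mathcal{P}$ be the partition of $X$ into classes of vertices with the same stable colour under iterated degree refinement. This is the coarsest equitable partition: each $x$ in a class $P_i$ has the same number $c_{ij}$ of neighbours in class $P_j$. Graphs with equal iterated degree sequences have stable partitions $\{P_i\}$ and $\{Q_i\}$ matched by colour, with $|P_i|=|Q_i|$ and identical parameters $c_{ij}$.

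\textbf{Equal iterated degree sequences imply fractional isomorphism.} We construct $S$ explicitly. Order the vertices by colour class and put $S_{yx}=1/|P_i|$ when $y\in Q_i$ and $x\in P_i$, and $S_{yx}=0$ otherwise. Since $|P_i|=|Q_i|$, the matrix $S$ is block diagonal with blocks $\frac{1}{|P_i|}J$, so it is doubly stochastic. For $y\in Q_i$ and $x\in P_j$ we compute both sides entrywise:
\[
(BS)_{yx}=\frac{1}{|P_j|}\,\#\{\text{neighbours of } y \text{ in } Q_j\}=\frac{c_{ij}}{|P_j|},
\qquad
(SA)_{yx}=\frac{1}{|P_i|}\,\#\{\text{neighbours of } x \text{ in } P_i\}=\frac{c_{ji}}{|P_i|}.
\]
These agree because double counting the edges between $P_i$ and $P_j$ gives $|P_i|c_{ij}=|P_j|c_{ji}$.

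\textbf{Fractional isomorphism implies equal iterated degree sequences.} Suppose $SA=BS$ with $S$ doubly stochastic. Since $A$ and $B$ are symmetric, taking transposes gives $AS^T=S^TB$. We show by induction on $r$ that $S_{yx}>0$ implies $d_r(y)=d_r(x)$.

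The base case is $r=0$. Write $\mathbf 1$ for the all-ones vector. Then $BS\mathbf 1=B\mathbf 1$ and $SA\mathbf 1$ are equal, so $d_B=S\,d_A$, and likewise $d_A=S^T d_B$. Hence $d_A=S^TS\,d_A$. The matrix $M=S^TS$ is symmetric and doubly stochastic, so its eigenvalues lie in $[-1,1]$. Next we use a convexity argument: $\|d_B\|^2=\|S d_A\|^2\le\|d_A\|^2$, and symmetrically $\|d_A\|^2\le\|d_B\|^2$, so equality holds throughout. Equality in Jensen's inequality, applied row by row, forces $d_A$ to be constant on the support of each row of $S$. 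Combined with $d_B=Sd_A$, this gives $d_B(y)=d_A(x)$ whenever $S_{yx}>0$.

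For the inductive step we replace the degree vector by indicator data. Fix a colour $c$ appearing at level $r-1$, and let $\chi^A_c$ and $\chi^B_c$ be the indicator vectors of the vertices with $d_{r-1}=c$. By the induction hypothesis, $S$ maps vertices of colour $c$ only to vertices of colour $c$, so $S\chi^A_c=\chi^B_c$. (The row and column sums over a colour block force $S$ to restrict to a doubly stochastic block there, and in particular the colour classes have equal sizes.) Then
\[
B\chi^B_c=BS\chi^A_c=SA\chi^A_c,
\]
so the vector counting neighbours of colour $c$ in $\Delta$ is $S$ applied to the corresponding vector in $\Gamma$, with the same relation for $S^T$. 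Repeating the norm/Jensen argument coordinate by coordinate shows that the counts of neighbours of each colour agree whenever $S_{yx}>0$. Therefore $d_r(y)=d_r(x)$.

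Finally, every $y$ has some $x$ with $S_{yx}>0$, and each colour class has the same size in both graphs because $S$ is block doubly stochastic. So the multisets $d_r(\Gamma)$ and $d_r(\Delta)$ coincide for every $r$.

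The main obstacle is the equality case of the norm inequality. It requires care that positivity of $S_{yx}$ transfers the colour, rather than merely forcing equal averages; this is exactly where Jensen's strict convexity is used.
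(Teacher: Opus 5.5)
Your proof is correct. The paper gives no proof of this statement; it quotes it from \cite{RSU}, so the relevant comparison is with the standard argument there.

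Your first direction is the usual construction: average over the cells of the stable colouring, so that $S$ is block diagonal with blocks $\frac{1}{|P_i|}J$. For the converse, the standard route goes through equitable partitions. One permutes the rows and columns of $S$ independently to write it as a direct sum of indecomposable doubly stochastic blocks. One then shows that the resulting block partition is a common equitable partition of $\Gamma$ and $\Delta$ with the same parameters. The key fact is that on an indecomposable block $S_i$, the relations $y=S_ix$ and $x=S_i^\top y$ force $x$ and $y$ to be the same constant vector. Only after this does one deduce equal iterated degree sequences, by induction over the common partition.

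You instead use the pointwise version of that fact: $y=Sx$ and $x=S^\top y$ imply $y_i=x_j$ whenever $S_{ij}>0$, which follows from the equality case of $\|Sv\|\le\|v\|$. You then run it directly along colour refinement, applying it at level $r$ to the count vectors $A\chi^A_c$ and $B\chi^B_c$. This avoids decomposing $S$ and the connectivity argument needed to get constancy on a whole block. It also shows directly that the support of any fractional isomorphism respects the stable colouring. What the standard route gives in exchange is the intermediate characterization by common (coarsest) equitable partitions, which is of independent interest.

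Two small cleanups. First, the remark that $S^\top S$ has eigenvalues in $[-1,1]$ is never used and can be deleted. Second, in the first direction, add one line justifying that the colourings stabilise to a matched equitable partition:
\begin{itemize}
\item With the paper's definition, $d_r(x)$ determines $d_{r-1}(x)$, since $|d_1(x)|=d_0(x)$ and then by induction. So the colour partitions refine one another.
\item The number of colours at each level is read off from $d_r(\Gamma)=d_r(\Delta)$. Hence both graphs stabilise at the same level, with the same cell sizes and the same parameters $c_{ij}$.
\end{itemize}
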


\subsection{Graph spectra}
Let $\Gamma$ be a finite graph with adjacency matrix $A$. 
By definition, the matrix $A$ is symmetric, i.e. $A^\top = A$. 
Therefore, all its eigenvalues are real. 
The \emph{spectrum} of $\Gamma$ is the multiset of all eigenvalues of $A$ counted with their multiplicities.  
We say that two graphs are \emph{cospectral} if they have the same spectrum. 

\begin{theorem}[\cite{DSZ}]\label{thm:C3}
    If two graphs are $C^3$-equivalent, then they are cospectral. 
\end{theorem}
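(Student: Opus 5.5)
The plan is to show that $C^3$-equivalent graphs have the same closed walk counts $\operatorname{tr}(A^k)$ for every $k\ge 0$. This suffices: for a real symmetric $n\times n$ matrix, the power sums $\sum_i \lambda_i^k=\operatorname{tr}(A^k)$ for $k=1,\dots,n$ determine the elementary symmetric functions of the eigenvalues via Newton's identities. Hence they determine the characteristic polynomial and so the spectrum. Equality of the number of vertices is also needed, and it is expressible in $C^1\subseteq C^3$ by the sentence $\exists^{=n}x\,(x=x)$, or with equality replaced by $\lnot E(x,x)$.

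The key step is to express walk counts in $C^3$. For each $k\ge 1$ and each integer $m\ge 1$, I will construct by induction on $k$ a formula $\psi_{k,m}(x,y)$ of $C^3$, with free variables among $x,y$. It should hold of $(a,b)$ if and only if the number of walks of length $k$ from $a$ to $b$, namely $(A^k)_{ab}$, is at least $m$.

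The base case is $\psi_{1,1}(x,y)=E(x,y)$, and $\psi_{1,m}$ is false for $m\ge 2$. For the inductive step, $(A^{k+1})_{ab}=\sum_{c} A_{ac}(A^k)_{cb}=\sum_{c\in\Gamma(a)}(A^k)_{cb}$. The number of walks is therefore $\ge m$ if and only if there exist values $j_1,\dots$ and counts $t_j$ with $\sum_j j\,t_j\ge m$, where exactly $t_j$ neighbours $c$ of $a$ satisfy $(A^k)_{cb}=j$. Since $(A^k)_{cb}\le (n-1)^k$ is bounded but $n$ is not fixed, I will instead write a finite disjunction for each fixed $m$. It suffices to consider values $j$ truncated at $m$: define $\theta_{k,j}=\psi_{k,j}\wedge\lnot\psi_{k,j+1}$ for $j<m$, and $\psi_{k,m}$ for the top class. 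Then $\psi_{k+1,m}(x,y)$ is the disjunction, over all tuples $(t_1,\dots,t_m)$ with $t_j\le m$ and $\sum_j j\,t_j\ge m$, of
\[
\bigwedge_j \exists^{\ge t_j} z\,\bigl(E(x,z)\wedge \theta_{k,j}(z,y)\bigr).
\]
Here $\theta_{k,j}(z,y)$ is obtained from the formula in variables $(x,y)$ by renaming, reusing the now-free third variable. This reuse is the main technical point and the only delicate step. Inside $\theta_{k,j}(z,y)$ the quantified variable must be $x$, which is legal since $x$ is not needed there. So at every level only the three variables $x,y,z$ are in play, cycling the roles of $x$ and $z$.

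Finally, $\operatorname{tr}(A^k)=\sum_a (A^k)_{aa}$. Using $\psi_{k,j}(x,x)$ and counting quantifiers over $x$, the sentence $\exists^{\ge t}x\,\theta_{k,j}(x,x)$ for each $j,t$ pins down the multiset of diagonal entries of $A^k$, and hence the trace. $C^3$-equivalent graphs agree on all these sentences, so they have equal traces of all powers and the same number of vertices, and are therefore cospectral by Newton's identities. The only place I expect care is the variable bookkeeping in the inductive step; the rest is routine.
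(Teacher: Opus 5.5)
The paper does not prove this statement; it imports it from \cite{DSZ}, so there is no in-text argument to compare against. Your proof is correct and self-contained, with one slip noted below. Its core is the three-variable counterpart of the paper's Lemma~\ref{lem:walk}: you define the truncated walk counts $(A^k)_{ab}\ge m$ in $C^3$, using the extra variable to hold the endpoint of the walk. You then read off $\operatorname{tr}A^k$ and finish with Newton's identities. Your truncation is sound: if some neighbour of $a$ lies in the top class, both sides of the equivalence hold; otherwise the weighted sum is exact.

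A different route within the paper's own toolkit would go through Theorem~\ref{thm:cfi}. The intersection numbers are the structure constants of the coherent algebra, so one would check that the induced algebra isomorphism sends $A$ to $B$ and preserves traces. That is shorter given CFI, but it leaves all the logic inside the cited theorem. Yours exhibits the distinguishing sentences explicitly.

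The slip is in the last step: you cannot form $\psi_{k,j}(x,x)$ by substituting $x$ for $y$. In your construction the target variable $y$ is never quantified, but it does occur free inside the scope of the reused quantifier over $x$. Already $\psi_{3,m}(x,y)$ contains, via $\theta_{2,j}(z,y)$, the subformula $\exists^{\ge t}x\,(E(z,x)\wedge E(x,y))$. Substituting $x$ for $y$ turns it into $\exists^{\ge t}x\,(E(z,x)\wedge E(x,x))$, which is false in every loopless graph, so the resulting formula no longer expresses $(A^k)_{xx}\ge j$.

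The fix is to substitute into the other slot. The source variable $x$ occurs free only in the outermost atoms $E(x,z)$, under $\exists z$ alone, and $y$ is never bound. Hence $\psi_{k,j}(y,y)$ is capture-free and correctly expresses $(A^k)_{yy}\ge j$. Alternatively, if equality is in the language, use $\exists y\,(x=y\wedge\psi_{k,j}(x,y))$. With this change, the $C^3$ sentences $\exists^{=t}y\,(\psi_{k,j}(y,y)\wedge\lnot\psi_{k,j+1}(y,y))$ determine the multiset of diagonal entries of $A^k$, and the rest of your argument goes through unchanged.
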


In general, cospectrality is not a sufficient condition for $C^3$-equivalence.  
For example, the complete bipartite graph $K_{1,4}$ and the disjoint union $C_4 + K_1$ of the 4-cycle $C_4$ and a vertex $K_1$ are cospectral but  not $C^3$-equivalent. 
Indeed, they are distinguishable by a sentence of $C^3$ that asserts the existence of an isolated vertex: $K_{1,4}\not \models \exists x \forall y \lnot E(x,y)$  and $C_4 + K_1\models\exists x \forall y \lnot E(x,y)$.

Also, cospectrality is not necessary if we allow only two variables: the $6$-cycle $C_6$ and  the disjoint union  $2K_3$ of two triangles $K_3$ are not cospectral and cannot be distinguished by any sentence using two distinct variables and counting quantifiers (see, e.g., \cite[Section 4.7]{IL}). 

Even if we allow more than three variables, the use of counting quantifiers is essential: for each positive integer $k\geq 3$, there is a pair of non-isomorphic graphs which are not  cospectral  and that cannot be distinguished by any sentence using  $k$ distinct variables and no counting quantifiers \cite[Proposition 4]{DSZ}.  

\subsection{Coherent configurations} 
\label{subsec:coherent}
A \emph{coherent configuration} on a set $X$ is a collection $\Pi=\{R_1,\dots,R_s\}$ of binary relations  on $X$ satisfying the following conditions:
\begin{enumerate}[(i)]
\item $\{R_1,\dots,R_s\}$ is a partition of $X\times X$;
\item there is a subset $H$ of $\{1,\dots,s\}$ such that $\bigcup_{h\in H} R_h = \{(x,x) \mid x\in X\}$;
\item  for each $i \in \{1,\dots, s\}$ there exists a $j \in \{1,\dots, s\}$ such that $(x,y)\in R_i$ implies $(y,x)\in R_j$;
\item for each $i,j,k \in \{1,\dots, s\}$ there exists a non-negative integer $p_{ij}(k)$ such that if $(x,y)\in R_k$, the number of elements $z \in X$ with $(x,z)\in R_i$ and $(z,y)\in R_j$ is equal to $p_{ij}(k)$.
\end{enumerate}
The numbers $p_{ij}(k)$ are called the \emph{intersection numbers} of the coherent configuration $\Pi$. The number $s$ of relations is called the \emph{rank} of $\Pi$. 

The set of all coherent configurations on $X$ is partially ordered by refinement: $\Theta$ \emph{refines} $\Lambda$ if every $R\in \Theta$ is contained in some $S\in \Lambda$. 

\subsubsection{The coherent configuration of a graph}
Let $\Gamma = (X,E)$ be a  graph and $\Pi_\Gamma = \{R_1,\dots,R_s\}$ be a collection of binary relations on $X$ 
satisfying (i)--(iv) above, and 
\begin{itemize}
\item[(v)] there is a subset $G$ of $\{1,\dots,s\}$ such that $\bigcup_{g\in G} R_g = E$.
\end{itemize}
Then $\Pi_\Gamma$ is the unique maximal coherent configuration on $X$ such that $E$ is the union of some relations in $\Pi_\Gamma$.

The \emph{2-dimensional Weisfeiler--Leman algorithm} is an efficient procedure that computes $\Pi_\Gamma$ for any finite graph $\Gamma$. 
This procedure works as follows. 
Let $f_0$ be the function on $X\times X$ defined by
\[
f_0(x,y) = 
\begin{cases}
0 & \textnormal{if  $x = y$}, \\
1 & \textnormal{if $(x,y)\in E$}, \\
2 & \textnormal{otherwise}. 
\end{cases}
\]
For $r\geq 0$, define the multiset $f_{r+1}(x,y) = \{( f_r(x,z) ,f_r(z,y)) \mid z\in X\}$ and let $\Pi_r$ be the partition of $X\times X$  so that $(x,y)$ and $(u,v)$ are in the same part if and only if  $f_r(x,y) = f_r(u,v)$. 
(Note that $f_1(x,y) \neq f_1(u,v)$ if $(x,y)$ and $(u,v)$ are at distinct mutual distance $d(x,y)\neq d(u,v)$.)
Let $\Pi_\Gamma = \Pi_r$ when $\Pi_r = \Pi_{r+1}$.  

Figure \ref{fig:partition} shows the coherent configuration obtained by this algorithm  for the 8-cycle $C_8$. 
In this case, all the relations in the partition are symmetric.  
\begin{figure}[htp!]
\medskip\hfil\begin{tikzpicture}[scale=0.85]
\node (a1) at ( 45:1) {};
\node (a2) at ( 90:1) {};
\node (a3) at (135:1) {};
\node (a4) at (180:1) {};
\node (a5) at (225:1) {};
\node (a6) at (270:1) {};
\node (a7) at (315:1) {};
\node (a8) at (360:1) {};
\draw (a1) to [loop above] (a1);
\draw (a2) to [loop above] (a2);
\draw (a3) to [loop above] (a3);
\draw (a4) to [loop above] (a4);
\draw (a5) to [loop above] (a5);
\draw (a6) to [loop above] (a6);
\draw (a7) to [loop above] (a7);
\draw (a8) to [loop above] (a8);
\end{tikzpicture}\hfil
\begin{tikzpicture}[scale=0.85]
\node (a1) at ( 45:1) {};
\node (a2) at ( 90:1) {};
\node (a3) at (135:1) {};
\node (a4) at (180:1) {};
\node (a5) at (225:1) {};
\node (a6) at (270:1) {};
\node (a7) at (315:1) {};
\node (a8) at (360:1) {};
\draw (a1)--(a2)--(a3)--(a4)--(a5)--(a6)--(a7)--(a8)--(a1);
\end{tikzpicture}\hfil\begin{tikzpicture}[scale=0.85]
\node (a1) at ( 45:1) {};
\node (a2) at ( 90:1) {};
\node (a3) at (135:1) {};
\node (a4) at (180:1) {};
\node (a5) at (225:1) {};
\node (a6) at (270:1) {};
\node (a7) at (315:1) {};
\node (a8) at (360:1) {};
\draw (a1)--(a3)--(a5)--(a7)--(a1); 
\draw (a2)--(a4)--(a6)--(a8)--(a2);
\end{tikzpicture}\hfil\begin{tikzpicture}[scale=0.85]
\node (a1) at ( 45:1) {};
\node (a2) at ( 90:1) {};
\node (a3) at (135:1) {};
\node (a4) at (180:1) {};
\node (a5) at (225:1) {};
\node (a6) at (270:1) {};
\node (a7) at (315:1) {};
\node (a8) at (360:1) {};
\draw (a1)--(a4)--(a7)--(a2)--(a5)--(a8)--(a3)--(a6)--(a1); 
\end{tikzpicture}\hfil\begin{tikzpicture}[scale=0.85]
\node (a1) at ( 45:1) {};
\node (a2) at ( 90:1) {};
\node (a3) at (135:1) {};
\node (a4) at (180:1) {};
\node (a5) at (225:1) {};
\node (a6) at (270:1) {};
\node (a7) at (315:1) {};
\node (a8) at (360:1) {};
\draw (a1)--(a5);
\draw (a2)--(a6);
\draw (a3)--(a7);
\draw (a4)--(a8); 
\end{tikzpicture}
\medskip
\caption{Coherent configuration of rank $5$ determined by the $8$-cycle.}
\label{fig:partition}
\end{figure}
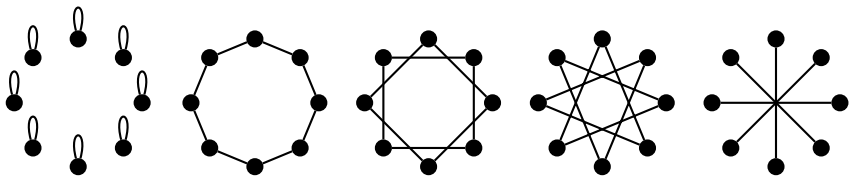

\subsubsection{Intersection equivalence}
We say that two coherent configurations $\Pi$ and $\bar{\Pi}$  with intersection numbers $p_{ij}(k)$ and $\bar{p}_{ij}(k)$ are \emph{intersection equivalent} if they have the same rank $s$, and have the same intersection numbers (up to a permutation of the indices): 
\[p_{ij}(k) = \bar{p}_{i j}(k)\]
for all $i,j,k\in \{1,\dots,s\}$.

There is a classic result of Cai, F\"urer and Immerman  that relates the  $C^3$-equivalence of graphs with the intersection equivalence of the coherent configurations obtained by the 2-dimensional Weisfeiler--Leman algorithm (see, e.g.,  \cite[Theorem 5.2]{CFI}). 
In our context, the right formulation is the following. 

\begin{theorem}[\cite{CFI}]\label{thm:cfi} 
Two graphs are $C^3$-equivalent if and only if they have intersection equivalent coherent configurations. 
\end{theorem}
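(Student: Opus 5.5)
The plan is to go through the pebble-game characterization of $C^3$-equivalence. On the coherent-configuration side, the colours produced by the 2-dimensional Weisfeiler--Leman algorithm are exactly the $C^3$-types of pairs, and the intersection numbers record how these colours combine.

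\textbf{Step 1: configurations to formulas.} Suppose $\Pi_\Gamma$ and $\Pi_\Delta$ are intersection equivalent via a bijection $R_i\mapsto \bar R_i$ of indices. I would check that this bijection is forced to send the diagonal relations and the edge relations to diagonal and edge relations. Reflexivity is detected by $p_{ii}(i)$ together with the identity relations. Membership in $E$ is not determined by the intersection numbers alone, so it has to be built into the notion of equivalence: the bijection should respect the indicator function $f_0$. I take this colour-preservation as part of the intended reading of "intersection equivalent."

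Next I would prove, by induction on formulas of $C^3$ with free variables among $x,y$, that each such formula $\varphi(x,y)$ is a union of relations $R_i$. The index set of this union must be the same in both graphs. The key case is $\exists^{\geq m} z\,(\psi_1(x,z)\wedge\psi_2(z,y))$ with the $\psi$'s already unions of relations. For $(x,y)\in R_k$, the number of witnesses $z$ is $\sum_{i\in I_1,j\in I_2}p_{ij}(k)$, which is the same in both configurations. Sentences then reduce to counting how many pairs lie in each relation, and these counts are also recoverable from the intersection numbers: the sizes of the diagonal classes come from the valencies $p_{ij}(k)$. So intersection-equivalent configurations give $C^3$-equivalent graphs.

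\textbf{Step 2: formulas to configurations.} Conversely, I would show by induction on $r$ that each class of $\Pi_r$ is definable by a $C^3$ formula $\theta_c(x,y)$. The base case is $f_0$, which is clearly definable. For the inductive step, the multiset condition $f_{r+1}(x,y)=\{(c_z,c'_z)\}$ is expressed as a conjunction of statements of the form $\exists^{=n}z\,(\theta_c(x,z)\wedge\theta_{c'}(z,y))$. This only requires reusing variables, which is why three variables suffice.

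Since the two graphs are $C^3$-equivalent, they have the same number of pairs in each definable class. Hence the stable partitions have the same rank under the matching of definable colours. Each intersection number $p_{ij}(k)$ is then expressed by a sentence asserting
\[
\forall x\forall y\,\bigl(\theta_k(x,y)\rightarrow \exists^{=p}z(\theta_i(x,z)\wedge\theta_j(z,y))\bigr),
\]
so the intersection numbers agree.

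\textbf{Main obstacle.} The subtle point is stabilization. The algorithm must stop at the same round $r$ in both graphs, and the colour names must be matched coherently. I would handle both by running the induction jointly on the two graphs: the $C^3$-equivalence at each round gives a canonical correspondence of colour classes with equal counts. The second delicate point is the precise convention, in the definition of intersection equivalence, for preserving the initial colouring, as noted in Step 1.
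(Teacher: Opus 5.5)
Your proposal is correct and is essentially the standard Cai--F\"urer--Immerman argument, which the paper only cites. In one direction, the 2-dimensional Weisfeiler--Leman colours are uniformly definable in $C^3$, which matches up the relations and gives sentences fixing each $p_{ij}(k)$. In the other, induction on formulas shows that $C^3$-definable binary relations are unions of basic relations, with witness counts given by sums of intersection numbers. The routine extra bookkeeping is that subformulas may have three free variables, and that transposition and fibre sizes are themselves determined by the intersection numbers.

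Your caveat about the definition is necessary, not merely a convention: under the paper's literal definition the statement is false. For $n\geq 2$, $K_n$ and the edgeless graph $\overline{K_n}$ have identical coherent configurations (diagonal versus off-diagonal pairs), yet they are distinguished already by $\exists x\exists y\,E(x,y)$. So the index bijection must map the relations contained in $E$ onto those contained in $\bar E$. Diagonality, by contrast, is forced, but not by $p_{ii}(i)$ alone: the off-diagonal relation of $K_3$ also has this number equal to $1$. Use instead that $R_h$ is diagonal iff $p_{hj}(k)=0$ for all $j\neq k$.
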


\section{Isomorphism of controllable graphs}
\label{sec:control}
The theory of controllable graphs was developed by Godsil in \cite{G}, where it was conjectured that the proportion of graphs on $n$ vertices which are controllable goes to 1 as $n \to \infty$. 
It was later confirmed by O'Rourke and  Touri \cite{OT} that indeed almost all graphs are controllable. 
In this section we show that if two controllable graphs are $C^{2}$-equivalent, then these two graphs  are isomorphic. 

\subsection{Controllable graphs}
The \emph{walk matrix} of a graph $\Gamma = (X,E)$ with $n$ vertices and adjacency matrix $A$ is the $n\times n$ matrix 
\[
    W_\Gamma =  \big[ \1\quad A\1\quad \cdots \quad A^{n-1}\1 \big],
\] 
where $\1$ denotes the all-one vector. 
If $X=\{1,\dots,n\}$, the $ij$-entry of the walk matrix $W_\Gamma$ counts the number of walks in $\Gamma$ of length $j-1$ starting at vertex $i$.  
 We say that the graph is \emph{controllable} if its walk matrix is invertible.  

If $\Gamma$ is regular of degree $d$, then $A\1 = d\1$; this implies that $W_\Gamma$ has rank 1. 
It follows that controllable graphs cannot be regular. 

We note also that if $P$ is a permutation matrix that commutes with $A$, then $PA^i\1 = A^i P\1 = A^i\1$ for $0\leq i\leq n-1$, and hence $PW_\Gamma = W_\Gamma$; this implies that $P=I$ when $W_\Gamma$ is invertible.  
Therefore, the only automorphism of a controllable graph is the identity. 

\subsection{Walk-equivalence} \label{subsec:C2andwalkequivalence}
We say that two graphs are \emph{walk-equivalent} if they have the same number of walks of length $i$ for all $i\geq 0$.

\begin{lemma} \label{lem:c2}
   If two graphs are $C^2$-equivalent, then they are walk-equivalent.
\end{lemma}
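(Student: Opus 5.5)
Two graphs $\Gamma$ and $\Delta$ with adjacency matrices $A$ and $B$ are walk-equivalent exactly when $\1^\top A^i \1 = \1^\top B^i \1$ for all $i\geq 0$, since $\1^\top A^i\1$ counts the walks of length $i$. The plan is to get this from a fractional isomorphism. Suppose $\Gamma\equiv_{C^2}\Delta$. By Theorem~\ref{thm:c2} the two graphs have equal iterated degree sequences. By Theorem~\ref{thm:degree} they are then fractionally isomorphic, so there is a doubly stochastic matrix $S$ with $SA=BS$.

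Note first that $C^2$-equivalence forces equal numbers of vertices, because the sentences $\exists^{=n}x\,(x=x)$ belong to $C^2$. Alternatively, the degree sequences have the same cardinality. Hence $S$ is square, and since it is doubly stochastic we have $S\1=\1$ and $\1^\top S=\1^\top$.

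Next, by induction on $i$, $SA^i=B^iS$: the base case is trivial, and $SA^{i+1}=(SA^i)A=B^iSA=B^iBS$. Therefore
\[
\1^\top A^i\1=\1^\top S A^i\1 = \1^\top B^i S\1=\1^\top B^i\1,
\]
which is the required equality of walk counts for every $i$.

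There is no real obstacle here. The only point needing care is that the matrix $S$ given by Theorem~\ref{thm:degree} is square, with both row and column sums equal to one, so that $\1^\top S=\1^\top$ and $S\1=\1$ hold on the appropriate sides. This is guaranteed by the equal vertex counts.

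As an independent check, one could also argue directly: for each $i$, the number of walks of length $i$ starting at a vertex $x$ is a function of the iterated degree $d_{i-1}(x)$. Summing over vertices, the total is then determined by the multiset $d_{i-1}(\Gamma)$. The matrix argument above is cleaner, though, so that is the route I will take.
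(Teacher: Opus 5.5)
Your proof is correct and is the paper's argument: Theorems~\ref{thm:c2} and~\ref{thm:degree} give a doubly stochastic $S$ with $SA=BS$, and then $\1^\top A^i\1=\1^\top SA^i\1=\1^\top B^iS\1=\1^\top B^i\1$. Your extra remarks on squareness of $S$ and the induction $SA^i=B^iS$ only spell out steps the paper leaves implicit.
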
   
\begin{proof}
    By Theorem \ref{thm:c2} and Theorem \ref{thm:degree}, $C^2$-equivalent graphs are fractionally isomorphic. 
    Let $\Gamma$ and $\Delta$ be graphs with adjacency matrices $A$ and $B$, respectively. 
    Since the sum of entries of the $i$-th power of the adjacency matrix is equal to the number of walks of length $i$ in the graph, $\1^\top A^i \1$ is the number of walks of length $i$ in $\Gamma$ and $\1^\top B^i \1$ is the number of walks of length $i$ in $\Delta$. 
    If $\Gamma$ and $\Delta$ are fractionally isomorphic, then there is a doubly stochastic matrix $S$ such that $SA = BS$ and hence 
    \[\1^\top A^i \1 = \1^\top S A^i \1 = \1^\top B^i S \1 = \1^\top B^i \1.\qedhere\]
\end{proof}

\begin{lemma} \label{lem:walk}
    If the graphs $\Gamma$ and $\Delta$ are $C^2$-equivalent, then there exists a permutation matrix $P$ such that $PW_\Gamma = W_{\Delta}$.
\end{lemma}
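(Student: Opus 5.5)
The plan is to show that the rows of $W_\Gamma$ and of $W_\Delta$ form the same multiset of vectors. Then a permutation matrix $P$ matching equal rows gives $PW_\Gamma = W_\Delta$. The row of $W_\Gamma$ indexed by a vertex $x$ is the vector $(w_0(x), w_1(x), \dots, w_{n-1}(x))$, where $w_j(x) = (A^j\1)_x$ is the number of walks of length $j$ starting at $x$. Note first that $C^2$-equivalent graphs have the same number of vertices, since $\exists^{=n}x\,(x=x)$ is a $C^2$-sentence. So $W_\Gamma$ and $W_\Delta$ are both $n\times n$ and the statement makes sense.

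The key step is to show that the iterated degree determines the walk counts. Precisely, for every $r\geq 0$ and every $j\leq r+1$, the value $w_j(x)$ is a function of $d_r(x)$ alone, and this function is the same for all graphs. I will prove this by induction on $r$.
\begin{itemize}
\item Base case $r=0$: we have $w_0(x)=1$ and $w_1(x)=d(x)=d_0(x)$.
\item Inductive step: use $w_{j}(x)=\sum_{y\in\Gamma(x)} w_{j-1}(y)$. Here $d_{r}(x)$ is the multiset $\{d_{r-1}(y)\mid y\in\Gamma(x)\}$. By induction each $w_{j-1}(y)$ with $j-1\leq r$ is a fixed function of $d_{r-1}(y)$. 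Hence the sum is a fixed function of the multiset $d_r(x)$, for all $j\leq r+1$.
\end{itemize}
Taking $r=n-2$ (or simply $r=n$), the whole row of $x$ in the walk matrix is a fixed function $F$ of $d_{r}(x)$.

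Now apply Theorem~\ref{thm:c2}: $\Gamma$ and $\Delta$ have equal iterated degree sequences. In particular $d_{r+1}(\Gamma)=d_{r+1}(\Delta)$, that is, the multisets $\{d_r(x)\mid x\in X(\Gamma)\}$ and $\{d_r(y)\mid y\in X(\Delta)\}$ coincide. So there is a bijection $\sigma$ from the vertices of $\Delta$ to those of $\Gamma$ with $d_r(\sigma(y))=d_r(y)$, and therefore row $y$ of $W_\Delta$ equals row $\sigma(y)$ of $W_\Gamma$. Let $P$ be the permutation matrix with $P_{y,\sigma(y)}=1$; then $PW_\Gamma=W_\Delta$.

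The main obstacle is making precise that $w_j$ depends only on $d_r$ through a graph-independent function, rather than on the particular graph. The induction above handles this, because the recursion uses only the multiset structure of $d_r$. An alternative route would use a doubly stochastic $S$ with $SA=BS$ from Theorem~\ref{thm:degree}, but that only gives $SW_\Gamma=W_\Delta$. Upgrading $S$ to a permutation would still require the refinement argument, so I would use the direct combinatorial induction.
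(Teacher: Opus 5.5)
Your proof is correct, but it is organized differently from the paper's.

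The paper never passes through iterated degrees. It writes down, by induction on $i$, explicit $C^2$-formulas $\phi^h_i(x)$ saying that exactly $h$ walks of length $i$ start at $x$. The step to $i+1$ is a disjunction over integer partitions of $h$ describing how walk counts are distributed among the neighbors of $x$. It then transfers the number of vertices satisfying these formulas from $\Gamma$ to $\Delta$ using $C^2$-equivalence directly.

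You instead use the direction ``$C^2$-equivalent $\Rightarrow$ equal iterated degree sequences'' of Theorem~\ref{thm:c2} as a black box. You then show that each $w_j(x)$ is a graph-independent function of the color $d_r(x)$, via the linear recursion $w_j(x)=\sum_{y\in\Gamma(x)}w_{j-1}(y)$.

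The underlying fact is the same: walk counts from a vertex are color-refinement invariants, hence $C^2$-definable. The trade-offs differ.
\begin{itemize}
\item The paper's route stays entirely at the level of formulas and does not invoke Immerman--Lander.
\item Your route replaces the partition-based formulas with simple arithmetic.
\item More importantly, yours matches whole rows at once, because the single color $d_r(x)$ determines $w_0(x),\dots,w_{n-1}(x)$ simultaneously.
\end{itemize}
The paper's final step, as written, produces a correspondence for each pair $(h,i)$ separately. To obtain one permutation valid for every column, one must count vertices satisfying the conjunction, via sentences $\exists^{=m}x\,\bigwedge_{i<n}\phi^{h_i}_i(x)$. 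One must also relativize the inner counting quantifiers of $\phi^h_{i+1}$ to neighbors of $x$. Your argument handles both points automatically.
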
   
\begin{proof} 
For each $h\geq 1$ and $i\geq 0$, we define a $C^2$-formula  $\phi^{h}_{i}(x)$ that
asserts the existence of exactly $h$ walks of length $i$ starting at vertex $x$.
For any vertex $x$, there is exactly one walk of length 0 starting at $x$, so letting $\bot$ be any false statement and $\top$ be any true statement, we define the $C^2$-formulas $\phi^{h}_{i}(x)$ by induction on $i$.
If $i = 0$, then 
\[
\phi^0_{0}(x) := \bot, \qquad \phi^1_{0}(x) := \top
 \qquad \textnormal{and} \qquad 
\phi^h_{0}(x) := \bot \quad \textnormal{for}\ h>1.
\] 
Now if $i = 1$, then 
\[
\phi^0_{1}(x) := \forall y\lnot E(x,y) \qquad \textnormal{and} \qquad 
\phi^h_{1}(x) := \exists^{=h} y  E(x,y) \quad \textnormal{for}\ h>0.
\]
For $i > 1$, we define
\[
\phi^0_{i+1}(x) := \forall y ( E(x,y) \to \phi^0_{i}(y) ),
\]
and if $h >0$, then 
\[
\phi^h_{i+1}(x) := \bigvee_{(h_1^{a_1},\dots,h_r^{a_r})\in H} [ ( \bigwedge_{j = 1}^r \exists^{=a_j}y\ \phi_{i}^{h_j}(y)) \land \exists^{\geq a}y E(x,y)],
\]
where $H$ denote the set of all integer partitions of $h$ (i.e., $h_j\geq 0$, $a_j\geq 1$ and $h= \sum_{j=1}^r a_jh_j$), and $a = \sum_{j=1}^r a_j$. 

We observe that in all these definitions we do not use more than two distinct variables.

By definition,  $\Gamma\models  \phi_{i}^{h}(x)$ if and only if there are $h$ walks of length $i$ in $\Gamma$ starting at vertex $x$. 

Since $\Gamma$ and $\Delta$ are $C^2$-equivalent, 
there is a vertex $x$ of $\Gamma$ such that $\Gamma\models \phi^{h}_{i}(x)$ if and only if there is a vertex $\bar{x}$ of $\Delta$ such that $\Delta \models \phi^{h}_{i}(\bar{x})$. 
Hence the mapping $x \mapsto \bar{x}$ is a bijection between the sets $\{x \in V_\Gamma\mid \Gamma\models\phi^{h}_{i}(x)\}$ and $\{\bar{x} \in V_{\Delta}\mid \Delta \models\phi^{h}_{i}(\bar{x})\}$. 
Since the rows of the walk matrix of a graph are indexed by the vertices of the graph, it follows that  the above bijection determines a permutation matrix $P$ such that $PW_{\Gamma} = W_{\Delta}$. 
\end{proof}

\subsection{Isomorphism and $C^2$-equivalence}
The \emph{complement} of a graph with adjacency matrix $A$ is the graph with adjacency matrix $J-I-A$. 
We say that two graphs are \emph{generalized cospectral} if they are cospectral with cospectral complements. 
A necessary condition for generalized cospectrality is walk-equivalence.
It follows from \cite[Corollary 3.2]{G} that, for controllable graphs, this condition is also sufficient. 

\begin{theorem}[\cite{G}]\label{thm:sufficient}
Two controllable graphs are walk-equivalent if and only if they are generalized cospectral. 
\end{theorem}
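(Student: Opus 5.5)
We need to prove Theorem~\ref{thm:sufficient}: two controllable graphs are walk-equivalent if and only if they are generalized cospectral. The plan is to express both conditions as statements about the generating functions of walk counts. Let $\Gamma,\Delta$ have $n$ vertices and adjacency matrices $A,B$, and set $N_\Gamma(t)=\sum_{k\ge0}(\1^\top A^k\1)t^k=\1^\top(I-tA)^{-1}\1$. Walk-equivalence says exactly $N_\Gamma=N_\Delta$ as formal power series (equivalently, as rational functions).

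The key algebraic tool is the matrix determinant lemma applied to the complement. For the matrix $A+sJ$ with $J=\1\1^\top$ we have
\[
\det(xI-A-sJ)=\det(xI-A)\bigl(1-s\,\1^\top(xI-A)^{-1}\1\bigr).
\]
The complement has adjacency matrix $J-I-A$, so its characteristic polynomial is, up to a sign and the substitution $x\mapsto -x-1$, equal to $\det(xI-A-J)$.

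For the direction \emph{generalized cospectral} $\Rightarrow$ \emph{walk-equivalent}, which holds for all graphs, I would argue as follows. Cospectrality gives $\det(xI-A)=\det(xI-B)$. Cospectral complements give $\det(xI-A-J)=\det(xI-B-J)$. The lemma with $s=1$ then yields $\1^\top(xI-A)^{-1}\1=\1^\top(xI-B)^{-1}\1$. Expanding in $1/x$ gives $\1^\top A^k\1=\1^\top B^k\1$ for all $k$.

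For the direction \emph{walk-equivalent} $\Rightarrow$ \emph{generalized cospectral}, walk-equivalence gives equality of the rational functions $\1^\top(xI-A)^{-1}\1$ and $\1^\top(xI-B)^{-1}\1$. By the determinant formula, it then suffices to show $\det(xI-A)=\det(xI-B)$; the complement identity follows automatically.

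This is where controllability enters, and it is the main obstacle. Write $\1^\top(xI-A)^{-1}\1=p_A(x)/\phi_A(x)$ with $\phi_A=\det(xI-A)$ and $p_A=\1^\top\operatorname{adj}(xI-A)\1$. I claim that if $W_\Gamma$ is invertible then $\gcd(p_A,\phi_A)=1$. Invertibility means $\1$ is a cyclic vector for $A$. Hence $A$ has simple eigenvalues $\lambda_1,\dots,\lambda_n$, and the orthogonal projection of $\1$ onto each eigenvector $v_i$ is nonzero (otherwise all $A^k\1$ lie in a proper invariant subspace). Then
\[
\1^\top(xI-A)^{-1}\1=\sum_{i=1}^n \frac{(\1^\top v_i)^2}{x-\lambda_i}
\]
has a genuine pole at every $\lambda_i$, so its reduced denominator has degree $n$ and equals $\phi_A$.

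Now assume both graphs are controllable. Each side of the equality of rational functions is already in lowest terms with a monic denominator of degree $n$, so $\phi_A=\phi_B$. Note that only one graph actually needs to be controllable: comparing denominator degrees, the other reduced denominator divides a degree-$n$ monic polynomial and must equal it. I expect the careful justification that cyclicity of $\1$ forces every eigenvalue to appear as a pole to be the step requiring the most care.
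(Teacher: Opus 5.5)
Your proof is correct, and it takes a different route from the one the paper relies on. The paper gives no proof of its own; it cites Godsil, and the tool it quotes right afterwards (Godsil's Lemma 6.1) reflects the walk-matrix route. That route runs as follows. Walk-equivalence gives $W_\Gamma^\top W_\Gamma=W_\Delta^\top W_\Delta$ and $W_\Gamma^\top AW_\Gamma=W_\Delta^\top BW_\Delta$, since the entries are $\1^\top A^{i+j}\1$ and $\1^\top A^{i+j+1}\1$. Hence $W_\Delta$ is invertible too, and $Q=W_\Delta W_\Gamma^{-1}$ is orthogonal with $Q\1=\1$ and $QAQ^\top=B$. This $Q$ therefore also conjugates $J-I-A$ to $J-I-B$.

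You instead use the classical Cvetković-type identity obtained from the matrix determinant lemma. Controllability enters through the fact that all $n$ eigenvalues are simple and main, so $\det(xI-A)$ is the reduced denominator of $\1^\top(xI-A)^{-1}\1$. Your version is self-contained, settles the converse for all graphs at once, and makes visible that only one of the two graphs needs to be controllable. The walk-matrix version instead produces the explicit matrix $Q=W_\Delta W_\Gamma^{-1}$. That is exactly what the proof of Theorem~\ref{thm:iso} uses (to conclude $Q=P$), and your argument does not supply it.

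The step you flagged is short. Let $E_\lambda$ be the orthogonal projection onto the $\lambda$-eigenspace. Each $A^k\1=\sum_\lambda\lambda^kE_\lambda\1$ lies in the span of the nonzero vectors $E_\lambda\1$. So invertibility of $W_\Gamma$ forces $n$ distinct eigenvalues, with $E_\lambda\1\neq0$ for every one of them.

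There is one slip to fix. The characteristic polynomial of $J-I-A$ is $(-1)^n\det\bigl((-x-1)I-A+J\bigr)$, so you need $s=-1$, not $s=1$. For $\Gamma=K_1$, your version yields $x+2$ (up to sign) instead of $x$. Since the determinant lemma holds for every $s$, nothing else in your argument changes.
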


If two controllable graphs $\Gamma$ and $\Delta$ with adjacency matrices $A$ and $B$ are generalized cospectral, then the matrix $Q = W_{\Delta}W_{\Gamma}^{-1}$ satisfies $QAQ^\top= B$ and $Q\1 = \1$ (see \cite[Lemma 6.1]{G}).
We shall use this remark to prove our next result. 

\begin{theorem} \label{thm:iso}
Two controllable graphs $\Gamma$ and $\Delta$ are isomorphic if and only if they are  $C^2$-equivalent.
\end{theorem}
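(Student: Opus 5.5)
The forward direction is immediate, since isomorphic graphs satisfy the same first-order sentences and in particular the same $C^2$-sentences. The work is in the converse. Suppose $\Gamma$ and $\Delta$ are controllable and $C^2$-equivalent, with adjacency matrices $A$ and $B$. The plan is to produce a permutation matrix $P$ with $PAP^\top = B$.

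First, by Lemma \ref{lem:c2}, $\Gamma$ and $\Delta$ are walk-equivalent. By Theorem \ref{thm:sufficient} they are therefore generalized cospectral, so the remark following that theorem applies. The matrix $Q = W_\Delta W_\Gamma^{-1}$ satisfies $QAQ^\top = B$ and $Q\1 = \1$. Here $W_\Gamma$ is invertible by controllability of $\Gamma$.

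Second, Lemma \ref{lem:walk} supplies a permutation matrix $P$ with $PW_\Gamma = W_\Delta$. Since $W_\Gamma$ is invertible, we get
\[ Q = W_\Delta W_\Gamma^{-1} = P W_\Gamma W_\Gamma^{-1} = P. \]
Thus $Q$ is itself a permutation matrix, so $P^\top = P^{-1}$ and $PAP^{-1} = B$. This says exactly that $P$ encodes an isomorphism $\Gamma \to \Delta$.

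The delicate step is the appeal to Lemma \ref{lem:walk}. Equivalently, one could avoid the cited remark and verify $PAP^\top = B$ directly from $PW_\Gamma = W_\Delta$. That route compares $W_\Delta^\top W_\Delta = W_\Gamma^\top W_\Gamma$ and uses that both Gram matrices are Hankel in the walk counts $\1^\top A^{i}\1$. I would rely on the cited \cite[Lemma 6.1]{G} for $QAQ^\top = B$. I would then note that the identification $Q = P$ is forced purely by invertibility, so no further compatibility check between $P$ and the adjacency structure is needed.

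One point to double-check is that the row-matching in Lemma \ref{lem:walk} yields a genuine bijection of vertices. This holds because $C^2$-equivalence matches the number of vertices realising each full row vector of walk counts, not just each single walk count. In a controllable graph all rows of $W_\Gamma$ are distinct, so this matching is indeed a permutation.
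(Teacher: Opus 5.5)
Your proof is correct and is essentially the paper's argument: walk-equivalence from Lemma~\ref{lem:c2}, generalized cospectrality from Theorem~\ref{thm:sufficient}, Godsil's $Q = W_\Delta W_\Gamma^{-1}$ with $QAQ^\top = B$, and $Q = P$ via Lemma~\ref{lem:walk} and invertibility of $W_\Gamma$. Your justification of the vertex bijection in Lemma~\ref{lem:walk} is cleaner than the paper's wording: sentences $\exists^{=m}x\bigwedge_{i}\phi^{h_i}_{i}(x)$ match the number of vertices carrying each full row of walk counts. Distinctness of rows is needed only for uniqueness of that bijection, not for its existence.
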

\begin{proof}  
Let $A$ and $B$ be the adjacency matrices of  $\Gamma$ and $\Delta$, respectively. 
Suppose $\Gamma$ and $\Delta$ are  $C^2$-equivalent. 
From Lemma \ref{lem:c2},  $\Gamma$ and $\Delta$ are walk-equivalent.
Theorem \ref{thm:sufficient} implies that $\Gamma$ and $\Delta$ are generalized cospectral. 
Then the matrix $Q = W_{\Delta}W_{\Gamma}^{-1}$ satisfies $QAQ^\top= B$ and $Q\1 = \1$.  
Lemma  \ref{lem:walk} implies that there is a permutation matrix $P$ such that $PW_{\Gamma} =  W_{\Delta}$. 
It  follows that \[Q  = W_{\Delta}W_{\Gamma}^{-1} = PW_{\Gamma}W_{\Gamma}^{-1} = P,\]  and hence that $PAP^\top = B$. 
Therefore,  $\Gamma$ and $\Delta$ are isomorphic and this proves the theorem, because isomorphic graphs are $C^k$-equivalent for any $k>0$. 
\end{proof}

\section{Cospectrality of distance-regularized graphs}
\label{sec:distanceregularized}
Distance-regularized graphs are either distance-regular or distance-biregular \cite{GS}. 
In this section, we prove that the coherent configurations determined by cospectral distance-biregular graphs are intersection equivalent, see Theorem \ref{thm:cospec} below. 
As a consequence we obtain that if two distance-regularized are cospectral, then these two graphs are $C^3$-equivalent. 

\subsection{Distance-regularized graphs }
Given a graph $\Gamma = (X,E)$ and a vertex $x\in X$,  let $\Gamma_i(x)$ be the set of vertices at distance $i\geq 0$ from $x$ in $\Gamma$.
If $\Gamma$ is connected with diameter $d$, 
for any two vertices $x,y\in X$ with $y \in \Gamma_i(x)$, we define 
\[\begin{aligned}
c_i(x,y)&=|\Gamma_{i-1}(x)\cap \Gamma_1(y)|,\\
a_i(x,y)&=|\Gamma_{i}(x)\cap \Gamma_1(y)|,\\
b_i(x,y)&=|\Gamma_{i+1}(x)\cap \Gamma_1(y)|,
\end{aligned}\]
with $c_i(x,y) = 0$ when $i=0$. 
We say that the vertex $x$ is \emph{distance-regularized} if  the numbers $c_i(x,y), a_i(x,y)$ and $b_i(x,y)$ are independent of the choice of the vertex $y$ in $\Gamma_i(x)$, and in this case we denote them by $c_{i}(x), a_{i}(x)$ and $b_{i}(x)$, respectively.
 
A \emph{distance-regularized graph} is a connected graph in which every vertex is distance-regularized. 
If $\Gamma$ is  distance-regularized with diameter $d$, the array
\[
\iota(x)
=
\left[\begin{array}{ccccc}
0		&	c_1(x)	&	\cdots	&	c_{d-1}(x)	&	c_d(x)\\
a_0(x)		&	a_1(x)	&	\cdots	&	a_{d-1}(x)	&	a_d(x)\\
b_0(x)	&	b_1(x)	&	\cdots	&	b_{d-1}(x)	&	0
\end{array}\right]
\]
is called the \emph{intersection array} of vertex $x$.  

\subsubsection{Distance-regular graphs}
A \emph{distance-regular graph} is a distance-regularized graph in which all vertices have the same intersection array. 
If $\Gamma$ is  distance-regular with diameter $d$, the numbers $c_{i}(x), a_{i}(x)$ and $b_{i}(x)$ are denoted $c_{i}, a_{i}$ and $b_{i}$ ($0\leq i\leq d$). 
By definition, $\Gamma$ is $k$-regular with $k = b_0$ and $a_i = k - b_i - c_i $ for all $i$. 
The notation used in the literature for the intersection array of a distance-regular graph of diameter $d$ is 
\[
\iota = \{b_0,b_1,\dots,b_{d-1} ; c_1,c_2,\dots, c_d\}.
\]
See, e.g., the monograph by Brouwer, Cohen and Neumaier~\cite{BCN}. 

Let $\Gamma = (X,E)$ be a distance regular-graph with diameter $d$. 
For $i\geq 0$, we define the symmetric relation $R_{i}$ on $X$ by
\[
R_{i} = \{(x,y)\in X\times X \mid  d(x,y) = i\}.
\]
Then $R_{i} = \emptyset$ for $i>d$ and $R_i\cap R_j = \emptyset$ for all $i,j$. 
Moreover,   
\[
R_0\cup R_1 \cup \cdots \cup R_d = X\times X
\]
and hence $\{R_0,R_1,\dots,R_d\}$ is a partition of $X\times X$. 
Furthermore, we have $R_0= \{(x,x) \mid  x\in X\}$ and $R_1 = E$. 
Since $\Gamma$ is distance regular, there exist numbers $p_{ij}(k)$ such that for all $(x,y)\in R_k$,  
\[
 |\Gamma_i(x)\cap \Gamma_j(y)|  = p_{ij}(k). 
\] 
Therefore,  $\Pi_\Gamma = \{R_0,R_1,\dots,R_d\}$ is the coherent configuration of $\Gamma$. 

The next result says that the intersection numbers $p_{ij}(k)$ of $\Pi_\Gamma$ are determined by the intersection array of $\Gamma$ (see \cite{BCN}, Lemma 4.1.7). 

\begin{lemma}[\cite{BCN}]
\[
p_{i0}(k) = \delta_{ik}, \qquad p_{0j}(k)  = \delta_{jk},
\]
\[
p_{i1}(k) =
\begin{cases}
    c_i & \textnormal{if $k = i+1$},\\
    a_i & \textnormal{if $k = i$},\\
    b_i & \textnormal{if $k = i-1$},\\
    0 & \textnormal{otherwise}.
\end{cases} 
\] 
\[
p_{ij+1}(k)
= \frac{1}{c_{j+1}} 
[p_{i-1 j}(k)b_{i-1} + p_{ij}(k)(a_i - a_j) + p_{i+1j}(k)c_{i+1} - p_{ij-1}(k) b_{j-1}
].
\]
\end{lemma}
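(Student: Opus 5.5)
The plan is to verify the three groups of identities in order, using only that $\Gamma$ is distance-regular, so that $p_{ij}(k)=|\Gamma_i(x)\cap\Gamma_j(y)|$ for any $(x,y)\in R_k$. Throughout, fix such a pair $(x,y)$ with $d(x,y)=k$.

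\emph{The cases with index $0$ and index $1$.} Since $\Gamma_0(y)=\{y\}$, the set $\Gamma_i(x)\cap\Gamma_0(y)$ is nonempty exactly when $d(x,y)=i$. This gives $p_{i0}(k)=\delta_{ik}$, and symmetrically $p_{0j}(k)=\delta_{jk}$. For $p_{i1}(k)$ we count neighbours $z$ of $y$ with $d(x,z)=i$. By the triangle inequality $d(x,z)\in\{k-1,k,k+1\}$, so only $i\in\{k-1,k,k+1\}$ can contribute. By the definitions of $c_k,a_k,b_k$, the counts are $c_k$, $a_k$ and $b_k$ respectively. Rewriting in terms of $i$ (for example, $i=k-1$ means $k=i+1$) yields the stated case list. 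I would double-check the indexing at this point, since the stated formula attaches $c_i,a_i,b_i$ to $k=i+1,i,i-1$. With the convention $p_{ij}(k)=|\Gamma_i(x)\cap\Gamma_j(y)|$, the count $p_{i1}(k)$ equals $b_{k}$, not $b_i$, when $i=k+1$. I would use the standard relation $k_ip_{i1}(k)\,=\,k_kp_{k1}(i)$, or reinterpret the formula through the symmetry $p_{ij}(k)=p_{ji}(k)$ together with counting from $y$'s side. The aim is to match the convention of \cite[Lemma 4.1.7]{BCN}, where indeed $p_{1j}(k)$ in the form $c_{j}, a_j, b_j$ appears naturally.

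\emph{The recurrence.} Here I would work with the distance matrices $A_i$ of $R_i$, for which $A_iA_j=\sum_k p_{ij}(k)A_k$. The basic three-term relation is
\[
A_jA_1 = b_{j-1}A_{j-1}+a_jA_j+c_{j+1}A_{j+1},
\]
which follows from the index-$1$ case. Solving it gives $c_{j+1}A_{j+1}=A_jA_1-b_{j-1}A_{j-1}-a_jA_j$. Multiplying on the left by $A_i$ and using associativity, $A_iA_jA_1=A_i A_1A_j$ (the Bose--Mesner algebra is commutative since all $A_i$ are polynomials in $A_1$), we expand $A_iA_1=b_{i-1}A_{i-1}+a_iA_i+c_{i+1}A_{i+1}$. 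Comparing coefficients of $A_k$ on both sides then gives
\[
c_{j+1}p_{i,j+1}(k)=b_{i-1}p_{i-1,j}(k)+a_ip_{ij}(k)+c_{i+1}p_{i+1,j}(k)-b_{j-1}p_{i,j-1}(k)-a_jp_{ij}(k),
\]
which is exactly the claimed formula. Finally we divide by $c_{j+1}\ne0$; this is valid for $j+1\le d$ because the graph is connected.

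The main obstacle is bookkeeping rather than ideas. One must be careful with boundary conventions ($p_{-1,j}=0$, $c_0=b_d=0$, and $p_{ij}(k)=0$ outside the range) and with the left/right index placement in the three-term relation, which is where the index mismatch noted above has to be resolved. An alternative that avoids matrices is a double count of paths $x\to z\to w\to y$ with prescribed distances, but the algebra route is cleaner.
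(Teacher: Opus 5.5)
\textbf{The index-$1$ case of the statement is false as printed, and your reconciliation cannot work.} Your own computation is right: for $d(x,y)=k$, $p_{i1}(k)$ equals $c_k,a_k,b_k$ for $i=k-1,k,k+1$. Rewritten in terms of $i$, this is $c_{i+1}$ if $k=i+1$, $a_i$ if $k=i$, and $b_{i-1}$ if $k=i-1$. Your sentence ``rewriting in terms of $i$ yields the stated case list'' is therefore wrong.

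No change of convention can rescue the printed values $c_i$ and $b_i$, because $p_{i1}(i+1)$ is a well-defined number and every route gives $c_{i+1}$. Symmetry $p_{ij}(k)=p_{ji}(k)$ gives the same count. The valency relation gives it too. That relation is $n_k\,p_{i1}(k)=n_i\,p_{k1}(i)$ with $n_h=|\Gamma_h(x)|$; you have the valencies on the wrong sides. Combined with $n_ib_i=n_{i+1}c_{i+1}$, it returns $c_{i+1}$ again.

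Here are concrete counterexamples:
\begin{itemize}
\item $p_{11}(0)$ is the valency $b_0$, but the display gives $b_1=b_0-a_1-1<b_0$. So it fails in every distance-regular graph with an edge.
\item In $K_2$, $p_{01}(1)=1$, whereas the display gives $c_0=0$.
\end{itemize}

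The statement is even inconsistent with its own recurrence. Putting $j=0$ there, with $c_1=1$, $a_0=0$ and $p_{i,-1}=0$, gives $p_{i1}(k)=\delta_{k,i-1}b_{i-1}+\delta_{ki}a_i+\delta_{k,i+1}c_{i+1}$. So the subscripts on $c$ and $b$ in the middle display should be $k$ rather than $i$. The right move is to point out this typo and prove the corrected display, which is exactly what your first paragraph does.

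\textbf{The rest of your proposal is sound once this is corrected.} Your three-term relation $A_jA_1=b_{j-1}A_{j-1}+a_jA_j+c_{j+1}A_{j+1}$ already uses the corrected values. With the printed ones it would read $b_jA_{j-1}+a_jA_j+c_jA_{j+1}$, and it would not produce the stated recurrence. The $\delta$-cases are correct.

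Your derivation of the recurrence is the standard argument behind Lemma 4.1.7 of Brouwer, Cohen and Neumaier, which the paper quotes without proof. You expand $A_iA_jA_1=A_iA_1A_j$ in two ways and compare coefficients of $A_k$. Comparing coefficients is legitimate because the $A_k$, $0\le k\le d$, have disjoint nonempty supports and are therefore linearly independent. Dividing by $c_{j+1}\ge 1$ is valid for $0\le j\le d-1$.
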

\subsubsection{Distance-biregular graphs}
\label{subsec:biregular}
A \emph{distance-biregular} graph is a bipartite distance-regularized graph in which all vertices in the same class of the bipartition have the same intersection array.
If $\Gamma=(X,E)$ is distance-biregular with bipartition $(X',X'')$, the intersection arrays are
\[
\iota' = \{b'_0,\dots,b'_{d'-1} ; c'_1,\dots, c'_{d'}\}\quad\textnormal{ and }\quad
\iota'' = \{b''_0,\dots,b''_{d''-1} ; c''_1,\dots, c''_{d''}\}
\]
with $d'=\max\{d(x,y) \mid x\in X', y\in X\}$ and $d''=\max\{d(x,y) \mid x\in X'', y\in X\}$.
By definition, $\Gamma$ is $(k,\ell)$-semiregular with $k = b'_0$ and $\ell = b''_0$. 
Moreover,  for all $i\geq 0$, we have  
\[
b'_i + c'_i =
\begin{cases}
k & \textnormal{if $i$ is even},\\
\ell & \textnormal{if $i$ is odd},
\end{cases}
\quad\textnormal{ and }\quad
b''_i + c''_i =
\begin{cases}
\ell&\textnormal{if $i$ is even},\\
k&\textnormal{if $i$ is odd}.
\end{cases}
\]

Figure \ref{fig:subdivision} shows an example of a distance-biregular graph which is not distance-regular.  
This graph is a subdivision of the complete graph $K_4$.   
It is distance-biregular with intersection arrays
\[
\iota' = \{3,1,2; 1,1,2\} \quad \textnormal{and} \quad \iota'' = \{2,2,1,1; 1,1,2,2\}.
\]
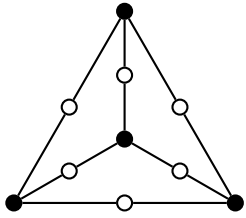
\begin{figure}[htp!]
\tikzset{every node/.style={draw, circle, inner sep = 2pt}}
\medskip\hfil\begin{tikzpicture}[scale=0.85]
\node[fill] (a1) at (90:0) {};
\node (a2) at (90:1) {};
\node[fill] (a3) at (90:2) {};
\node (a4) at (210:1) {};
\node[fill] (a5) at (210:2) {};
\node (a6) at (330:1) {};
\node[fill] (a7) at (330:2) {};
\node (a8) at (150:1) {};
\node (a9) at (270:1) {};
\node (a10) at (30:1) {};
\draw (a1)--(a2)--(a3);
\draw (a1)--(a4)--(a5);
\draw (a1)--(a6)--(a7);
\draw (a3)--(a8)--(a5)--(a9)--(a7)--(a10)--(a3);
\end{tikzpicture}
\medskip
\caption{A distance-biregular graph which is not distance-regular.}
\label{fig:subdivision}
\end{figure}

\subsection{Cospectrality}\label{sec:cospec}
The intersection arrays of a distance-biregular graph determine and are determined by the spectrum and the degrees of semiregularity (see \cite[Theorem 2.7.2]{L2}). 

\begin{theorem}[\cite{L2}]\label{thm:sabrina}
Let $\Gamma$ and $\Delta$ be distance-biregular graphs, and let $\iota'_\Gamma, \iota''_\Gamma$ and $\iota'_\Delta,\iota''_\Delta$ be their intersection arrays. 
Then, $\Gamma$ and $\Delta$ are cospectral and have the same degrees of semiregularity if and only if $\iota'_\Gamma = \iota'_\Delta$ and $\iota''_\Gamma =\iota''_\Delta$. 
\end{theorem}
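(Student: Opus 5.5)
The proof splits into the two directions, and both hinge on one local spectral picture. Fix a vertex $x\in X'$ with $\Gamma$ of diameter $d$ from $x$ (i.e.\ $d'$). Let $v_i$ be the characteristic vector of $\Gamma_i(x)$. Because $x$ is distance-regularized, $A v_i = b'_{i-1}v_{i-1} + a'_i v_i + c'_{i+1} v_{i+1}$, with $a'_i=0$ by bipartiteness. So $\mathrm{span}\{v_0,\dots,v_{d'}\}$ is $A$-invariant, and $A$ acts on it by a tridiagonal matrix $L'$ built from $\iota'$. Consequently the number of closed walks of length $r$ at $x$ is $(L'^r)_{00}$, which depends only on $\iota'$, and similarly for $X''$ with $L''$.

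\emph{($\Leftarrow$)} Assume $\iota'_\Gamma=\iota'_\Delta$ and $\iota''_\Gamma=\iota''_\Delta$. The degrees $k=b'_0$ and $\ell=b''_0$ agree. The sizes are determined as well: $|X''|=\sum_i k'_i$ over odd $i$ and $|X'|=\sum_i k'_i$ over even $i$, where $k'_0=1$ and $k'_{i+1}=k'_ib'_i/c'_{i+1}$. Then
\[
\operatorname{tr} A^r = |X'|\,(L'^r)_{00} + |X''|\,(L''^r)_{00},
\]
so all power traces coincide, and the two graphs are cospectral.

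\emph{($\Rightarrow$)} Here I would run the Lanczos/orthogonal-polynomial argument in reverse.

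First, from the spectrum, $n$, $k$ and $\ell$ I get $|X'|$ and $|X''|$ via $k|X'|=\ell|X''|$ and $|X'|+|X''|=n$.

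Next I recover the local spectral measure $\mu_x(\theta)=(E_\theta)_{xx}$ at each $x\in X'$, where $E_\theta$ is the spectral projection.
\begin{enumerate}[(a)]
\item The numbers $(E_\theta)_{xx}$ are constant on $X'$, since they are determined by $L'$: they are the squared first coordinates of the normalized eigenvectors of $L'$.
\item For $\theta\neq 0$, write an eigenvector as $(u',u'')$. The relations $Bu''=\theta u'$ and $B^\top u'=\theta u''$ give $\|u'\|=\|u''\|$. Hence $\operatorname{tr}(E_\theta|_{X'}) = m(\theta)/2$, and therefore $\mu_x(\theta)=m(\theta)/(2|X'|)$.
\item For $\theta=0$, the $X'$-part of the kernel is $\ker B^\top$, of dimension $|X'|-\tfrac12\#\{\text{nonzero eigenvalues}\}$. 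This fixes $\mu_x(0)$.
\end{enumerate}
Thus $\mu_x$ is determined by the spectrum and $k,\ell$.

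Finally, running the Stieltjes/Lanczos recurrence on $\mu_x$, started from $e_x$, reproduces the Jacobi matrix that is the symmetrization of $L'$. This works because the Krylov space of $e_x$ is exactly $\mathrm{span}\{v_i\}$ and $A$ is tridiagonal in the orthonormalized basis $v_i/\|v_i\|$. The off-diagonal entries are $\sqrt{b'_{i-1}c'_i}$, so the products $b'_{i-1}c'_i$ are determined, and $d'$ equals the number of distinct eigenvalues in the support of $\mu_x$ minus one. Using $c'_1=1$ and $b'_i+c'_i\in\{k,\ell\}$ according to parity, I solve recursively: $b'_0=k$, then $c'_1=1$, then $b'_1=\ell-1$, then $c'_2=b'_1c'_2/b'_1$, and so on. This determines $\iota'$, and symmetrically $\iota''$.

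The main obstacle is the $\Rightarrow$ direction, specifically the claim that the spectrum pins down the local measure at a vertex of each class. The bipartite norm-splitting in (b) and the kernel count in (c) are the key points, and I would check carefully that every eigenvalue with $\mu_x(\theta)>0$ is accounted for. Identifying the orthogonal-polynomial recurrence coefficients with the intersection numbers is standard once that measure is known.
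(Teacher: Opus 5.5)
Your proof is correct. It is also genuinely different from what the paper does: the paper gives no argument for this statement at all, but imports it as a black box from \cite[Theorem~2.7.2]{L2} and uses it only to derive Corollary~\ref{cor:arrays}.

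Your route is a self-contained inverse-spectral argument, the bipartite analogue of the standard proof that the spectrum of a distance-regular graph determines its intersection array. In the distance-regular case, walk-regularity gives $\mu_x(\theta)=m(\theta)/n$ directly. Here you instead obtain the local measure at a vertex of each class from two facts: the norm splitting $\|u'\|=\|u''\|$ for $\theta\neq 0$, and the constancy of $(E_\theta)_{xx}$ on each class. The Lanczos recurrence then recovers the Jacobi matrix.

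What this buys over the citation is transparency about where the degree hypothesis enters. The degrees $k$ and $\ell$ fix $|X'|$ and $|X''|$, which is what converts global multiplicities into local weights. They also let you split the products $b'_{i-1}c'_i$ via $b'_i+c'_i\in\{k,\ell\}$.

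You also get some byproducts:
\begin{enumerate}[(i)]
\item Every nonzero eigenvalue lies in the support of $\mu_x$. Hence $d'+1$ equals the number of distinct nonzero eigenvalues, plus one exactly when $|X'|>\operatorname{rank}B$.
\item When $k=\ell$ the two local measures coincide, so $\iota'=\iota''$. This removes any ambiguity in matching the classes of $\Gamma$ and $\Delta$.
\end{enumerate}
The citation, of course, buys brevity.

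A few small points to tidy up:
\begin{enumerate}[(i)]
\item In (a), $L'$ is not symmetric, so ``normalized eigenvectors of $L'$'' should refer to its symmetrization $DL'D^{-1}$ with $D=\operatorname{diag}(\sqrt{k'_i})$. The cleanest justification is that $\sum_\theta \theta^r\mu_x(\theta)=(L'^r)_{00}$ depends only on $\iota'$, and a finitely supported measure is determined by its moments.
\item Step (c) can be replaced by $\mu_x(0)=1-\sum_{\theta\neq 0}\mu_x(\theta)$.
\item The divisions in your final recursion are legitimate because $b'_i>0$ for $i<d'$, by connectivity.
\end{enumerate}
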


This result together with the next lemma imply that cospectral distance-biregular graphs have the same intersection arrays.

\begin{lemma}\label{lem:semi} 
Let $\Gamma$ be a bipartite $(k,\ell)$-semiregular graph and $\Delta$ a bipartite $(\bar{k},\bar{\ell})$-semiregular graph. 
If $\Gamma$ and $\Delta$ are cospectral and $k\neq \bar{k}$ or $\ell \neq \bar{\ell}$, then $k = \bar{\ell}$ and $\ell = \bar{k}$.
\end{lemma}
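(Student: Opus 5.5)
Let $A$ and $B$ be the adjacency matrices of $\Gamma$ and $\Delta$, with $\Gamma$ having bipartition classes $X',X''$ of sizes $n',n''$ and $\Delta$ having classes of sizes $\bar n',\bar n''$. The plan is to extract from the spectrum two quantities that are symmetric in the pair of degrees, and deduce that $\{k,\ell\}=\{\bar k,\bar\ell\}$ as multisets. This immediately gives the claim: if $k\neq\bar k$ or $\ell\neq\bar\ell$, the only other possibility is $k=\bar\ell$ and $\ell=\bar k$. I will assume, as is implicit for semiregular bipartite graphs in this context, that the graphs have no isolated vertices, so $k,\ell\geq 1$; they are connected anyway in the intended application.

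\textbf{Step 1: the largest eigenvalue.} For a bipartite $(k,\ell)$-semiregular graph, the vector equal to $\sqrt{k}$ on $X'$ and $\sqrt{\ell}$ on $X''$ is a positive eigenvector with eigenvalue $\sqrt{k\ell}$. Indeed, a vertex of $X'$ has $k$ neighbours in $X''$, giving $k\sqrt{\ell}=\sqrt{k\ell}\,\sqrt{k}$, and symmetrically for $X''$. By Perron--Frobenius, a nonnegative eigenvector with positive entries corresponds to the spectral radius. Hence the largest eigenvalue is $\sqrt{k\ell}$, and cospectrality gives $k\ell=\bar k\bar\ell$.

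\textbf{Step 2: the second and fourth moments.} Cospectral graphs have equal $\operatorname{tr}A^2$, which is twice the number of edges: $n'k=n''\ell=|E|$ and likewise $\bar n'\bar k=\bar n''\bar\ell=|\bar E|$, with $|E|=|\bar E|$. They also have the same number of vertices $n=n'+n''=|E|(1/k+1/\ell)$. It follows that
\[
\frac{k+\ell}{k\ell}=\frac{\bar k+\bar\ell}{\bar k\bar\ell},
\]
and combined with Step 1 this gives $k+\ell=\bar k+\bar\ell$.

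\textbf{Step 3: conclusion.} Knowing both the sum and the product, $\{k,\ell\}$ and $\{\bar k,\bar\ell\}$ are the root sets of the same quadratic, so they coincide as multisets, which proves the lemma. The only delicate point is Step 1: Perron--Frobenius must be applied correctly if the graph is disconnected. Even then, a positive eigenvector forces its eigenvalue to be the spectral radius, since for nonnegative symmetric $A$ and positive $v$ with $Av=\lambda v$ we get $\rho(A)=\lambda$. If the lemma is meant to allow isolated vertices, I would simply exclude that degenerate case, since the intended use concerns connected graphs.
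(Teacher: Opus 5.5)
Your argument is correct and is essentially the paper's proof: both use the Perron value $\sqrt{k\ell}$ to get $k\ell=\bar k\bar\ell$, and the equal vertex and edge counts to get $k+\ell=\bar k+\bar\ell$ (the paper reaches the latter as $\ell-\bar\ell=\bar k-k$ by direct manipulation rather than through your sum-and-product quadratic). Your explicit positive eigenvector of $A$, equal to $\sqrt{k}$ on $X'$ and $\sqrt{\ell}$ on $X''$, is a slightly cleaner route to the Perron value than the paper's passage through $NN^\top$; note also that Step 2 uses only the zeroth and second moments (vertex and edge counts), not the fourth.
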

\begin{proof}
Let $\Gamma=(X,E)$ be a bipartite $(k,\ell)$-semiregular graph with bipartition $(X',X'')$, and let $p =|X'|$ and $q = |X''|$.
The adjacency matrix $A$ of $\Gamma$ can be written in the form
\[
A = 
\begin{bmatrix}
    0 & N \\
    N^\top & 0   
\end{bmatrix},
\]
where $N$ is a $p\times q$ matrix with rows and columns indexed by the vertices in $X'$ and $X''$, respectively.
If we write $\1$ for a vector of suitable size whose coordinates are all one, it follows that $N\1 = k\1$ and $N^\top \1 = \ell \1$, and so that $k\ell$ is an eigenvalue of both $NN^\top$ and $N^\top N$. 
Since 
\[
A^2 = 
\begin{bmatrix}
    NN^\top 	& 0 \\
    0		& N^\top N
\end{bmatrix},
\]
we see that $k\ell$ is an eigenvalue of $A^2$, and hence that $\sqrt{k\ell}$ and $-\sqrt{k\ell}$ are both eigenvalues of $A$. 
Since all the entries of $\1$ are positive, by the Perron--Frobenius theorem,  $\sqrt{k\ell}$ is the largest eigenvalue of $A$. 

Let $\Gamma$ and $\Delta$ both have spectrum $\theta_0^{m_0}, \dots, \theta_d^{m_d}$ where $\theta_0 > \cdots > \theta_d$ and the exponent $m_i$ denotes the multiplicity of the eigenvalue $\theta_i$. 
Then 
\begin{equation}
\theta_0 = \sqrt{k\ell} = \sqrt{\bar{k}\bar{\ell}},
\end{equation}
and so 
\begin{equation} \label{eq:0}
k\ell = \bar{k}\bar{\ell}. 
\end{equation}

Suppose 
$\Delta$ has bipartition $(\bar{X}',\bar{X}'')$ with $|\bar{X}'| = \bar{p}$ and $|\bar{X}''| = \bar{q}$. 
The sum of the multiplicities is equal to the number of vertices, and the sum of the squares of  the eigenvalues is equal to two times the number of edges. 
Then 
\begin{equation} \label{eq:1}
p+q = \sum_{i=0}^d m_i = \bar{p} + \bar{q}
\end{equation}
and 
\begin{equation} \label{eq:2}
kp = \ell q = \frac{1}{2}\sum_{i=0}^d \theta_i^2 =  \bar{k}\bar{p} = \bar{\ell}\bar{q}.
\end{equation}

Now suppose without loss of generality that $\ell \neq \bar{\ell}$. 
From \ref{eq:0} and  \ref{eq:1}, we have $k\ell(p + q) = \bar{k}\bar{\ell}(\bar{p} + \bar{q})$.  
Then, using \ref{eq:2}, we get \[kp\ell + k\ell q = \bar{k}\bar{p}\bar{\ell} + \bar{k}\bar{\ell}\bar{q} = kp\bar{\ell} + \bar{k}\ell q.\] 
It follows, again by \ref{eq:2}, that \[kp(\ell-\bar{\ell}) = \ell q(\bar{k} -k) = kp(\bar{k} -k),\] so $0 \neq \ell-\bar{\ell} = \bar{k} -k$. 
Then, using \ref{eq:0} again, we have \[(\ell-\bar{\ell})\ell = (\bar{k} -k)\ell = \bar{k}\ell -k\ell = \bar{k}\ell - \bar{k}\bar{\ell} = \bar{k}(\ell-\bar{\ell}),\] so $\ell = \bar{k}$. 
Thus $\ell\bar{\ell} = \bar{k}\bar{\ell} = k\ell$, and so $\bar{\ell} = k$.
\end{proof}

Now we see that for distance-biregular graphs, cospectrality is equivalent to having the same intersection arrays. 

\begin{corollary}\label{cor:arrays}
 Two distance-biregular graphs are cospectral if and only if they have the same intersection arrays. 
\end{corollary}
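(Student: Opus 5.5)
The plan is to prove the two directions separately. Both rest on Theorem \ref{thm:sabrina} and Lemma \ref{lem:semi}. One subtlety must be handled throughout: the intersection arrays of a distance-biregular graph come as an unordered pair $\{\iota',\iota''\}$, since the labelling of the bipartition classes $X'$ and $X''$ is arbitrary. Accordingly, I read ``the same intersection arrays'' as equality of this unordered pair. Equivalently, it means equality after possibly swapping the roles of the two classes in one of the graphs.

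For the \emph{if} direction, suppose $\Gamma$ and $\Delta$ have the same intersection arrays. After relabelling the bipartition of $\Delta$ if necessary, we may assume $\iota'_\Gamma=\iota'_\Delta$ and $\iota''_\Gamma=\iota''_\Delta$. Theorem \ref{thm:sabrina} then gives cospectrality at once. The theorem in fact gives more, namely equal degrees of semiregularity, which we do not need here.

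For the \emph{only if} direction, suppose $\Gamma$ is $(k,\ell)$-semiregular, $\Delta$ is $(\bar k,\bar\ell)$-semiregular, and the two graphs are cospectral. Both are bipartite and semiregular by definition, so Lemma \ref{lem:semi} applies. It says that either $(k,\ell)=(\bar k,\bar\ell)$, or else $k=\bar\ell$ and $\ell=\bar k$. In the second case I swap the names of the two classes $\bar X'$ and $\bar X''$ of $\Delta$. This interchanges $\bar k$ and $\bar\ell$, and also interchanges $\iota'_\Delta$ and $\iota''_\Delta$. After the swap, $\Gamma$ and $\Delta$ are cospectral with the same degrees of semiregularity. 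Theorem \ref{thm:sabrina} now yields $\iota'_\Gamma=\iota'_\Delta$ and $\iota''_\Gamma=\iota''_\Delta$, so the unordered pairs of arrays coincide.

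The only delicate point is the bookkeeping of the bipartition labelling. One has to note that Theorem \ref{thm:sabrina} is stated for a fixed labelling, so its ordered conclusion must be matched with the possibly swapped degrees coming from Lemma \ref{lem:semi}. One must also check that swapping the classes swaps the arrays consistently, which holds because $\iota'$ is by definition the array of the class with degree $b'_0=k$. A further degenerate case deserves a remark: when $k=\ell$ and a distance-biregular graph is actually distance-regular, the two arrays coincide, so no relabelling is needed. Beyond these points the argument is a direct combination of the two cited results.
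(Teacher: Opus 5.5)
Your proposal is correct and follows the paper's proof: Lemma \ref{lem:semi} matches the degrees of semiregularity, and Theorem \ref{thm:sabrina} is then applied in both directions. Your explicit relabelling of the bipartition of $\Delta$ in the swapped case $k=\bar\ell$, $\ell=\bar k$ makes precise a step the paper passes over when it says Lemma \ref{lem:semi} gives ``the same degrees of semiregularity,'' which holds only up to interchanging the two classes.
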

\begin{proof}
If $\Gamma$ and $\Delta$ are cospectral distance-biregular graphs, then Lemma \ref{lem:semi} implies that $\Gamma$ and $\Delta$ have the same degrees of semiregularity. Thus, by Theorem \ref{thm:sabrina}, $\Gamma$ and $\Delta$ have the same intersection arrays. 
Conversely, if two distance-biregular graphs  have the same intersection arrays, then Theorem \ref{thm:sabrina} implies that they are cospectral. 
\end{proof}

\subsection{Intersection equivalence and $C^3$-equivalence}
In general, a coherent configuration consists of a set of directed graphs. 
Sometimes some of these graphs are undirected.   
For instance, the coherent configuration of the subdivision of $K_4$ is presented in Figure \ref{fig:coherent}.

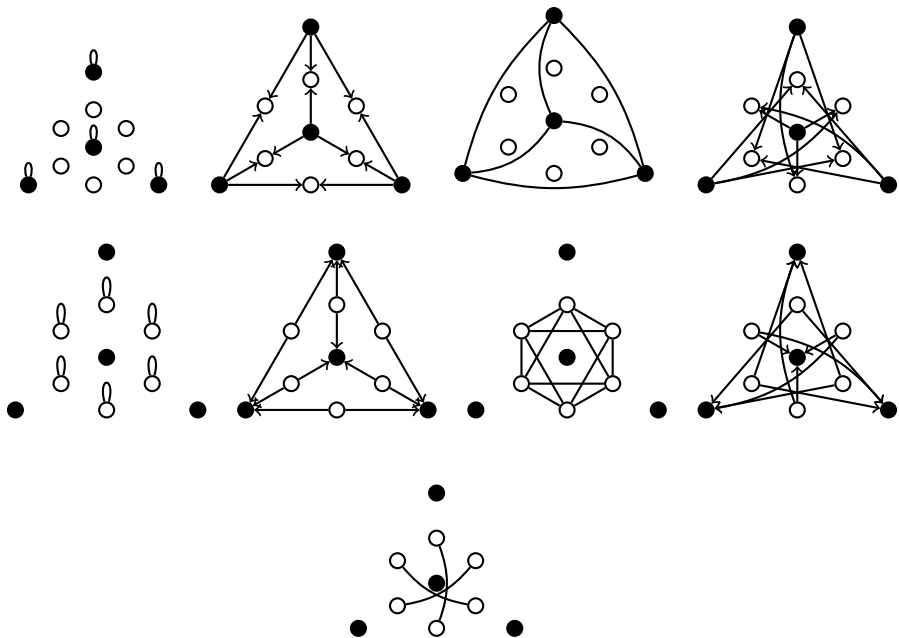
\begin{figure}[htp!]
\tikzset{every node/.style={draw, circle, inner sep = 2pt}}
\medskip\hfil\begin{tikzpicture}[scale=0.5]
\node[fill] (a1) at (90:0) {};
\node (a2) at (90:1) {};
\node[fill] (a3) at (90:2) {};
\node (a4) at (210:1) {};
\node[fill] (a5) at (210:2) {};
\node (a6) at (330:1) {};
\node[fill] (a7) at (330:2) {};
\node (a8) at (150:1) {};
\node (a9) at (270:1) {};
\node (a10) at (30:1) {};
\draw (a1) to [loop above] (a1); 
\draw (a3) to [loop above] (a3); 
\draw (a5) to [loop above] (a5); 
\draw (a7) to [loop above] (a7); 
\end{tikzpicture}
\medskip\hfil\begin{tikzpicture}[scale=0.7]
\node[fill] (a1) at (90:0) {};
\node (a2) at (90:1) {};
\node[fill] (a3) at (90:2) {};
\node (a4) at (210:1) {};
\node[fill] (a5) at (210:2) {};
\node (a6) at (330:1) {};
\node[fill] (a7) at (330:2) {};
\node (a8) at (150:1) {};
\node (a9) at (270:1) {};
\node (a10) at (30:1) {};
\draw[->] (a3)--(a2);
\draw[->] (a5)--(a4);
\draw[->] (a7)--(a6);
\draw[->] (a3)--(a8);
\draw[->] (a5)--(a8);
\draw[->] (a5)--(a9);
\draw[->] (a7)--(a9);
\draw[->] (a7)--(a10);
\draw[->] (a3)--(a10);
\draw[->] (a1)--(a2);
\draw[->] (a1)--(a4);
\draw[->] (a1)--(a6);
\end{tikzpicture}
\medskip\hfil\begin{tikzpicture}[scale=0.7]
\node[fill] (a1) at (90:0) {};
\node (a2) at (90:1) {};
\node[fill] (a3) at (90:2) {};
\node (a4) at (210:1) {};
\node[fill] (a5) at (210:2) {};
\node (a6) at (330:1) {};
\node[fill] (a7) at (330:2) {};
\node (a8) at (150:1) {};
\node (a9) at (270:1) {};
\node (a10) at (30:1) {};
\draw (a1) to [bend left=25] (a3); 
\draw (a1) to [bend left=25] (a5); 
\draw (a1) to [bend left=25] (a7); 
\draw (a3) to [bend left=15] (a7); 
\draw (a7) to [bend left=15] (a5); 
\draw (a5) to [bend left=15] (a3); 
\end{tikzpicture}
\medskip\hfil\begin{tikzpicture}[scale=0.7]
\node[fill] (a1) at (90:0) {};
\node (a2) at (90:1) {};
\node[fill] (a3) at (90:2) {};
\node (a4) at (210:1) {};
\node[fill] (a5) at (210:2) {};
\node (a6) at (330:1) {};
\node[fill] (a7) at (330:2) {};
\node (a8) at (150:1) {};
\node (a9) at (270:1) {};
\node (a10) at (30:1) {};
\draw[->] (a3) to [bend right=20] (a9);
\draw[->]  (a3)--(a6);
\draw[->]  (a5) to [bend right=20] (a10);
\draw[->]  (a5)--(a2);
\draw[->]  (a7) to  [bend right=20] (a8);
\draw[->]  (a7)--(a4);
\draw[->]  (a1)--(a9);
\draw[->]  (a1)--(a10);
\draw[->]  (a1)--(a8);
\draw[->]  (a3)--(a4);
\draw[->]  (a7)--(a2);
\draw[->]  (a5)--(a6);
\end{tikzpicture}
\medskip\hfil\begin{tikzpicture}[scale=0.7]
\node[fill] (a1) at (90:0) {};
\node (a2) at (90:1) {};
\node[fill] (a3) at (90:2) {};
\node (a4) at (210:1) {};
\node[fill] (a5) at (210:2) {};
\node (a6) at (330:1) {};
\node[fill] (a7) at (330:2) {};
\node (a8) at (150:1) {};
\node (a9) at (270:1) {};
\node (a10) at (30:1) {};
\draw (a2) to [loop above] (a2); 
\draw (a4) to [loop above] (a4); 
\draw (a6) to [loop above] (a6); 
\draw (a8) to [loop above] (a8); 
\draw (a9) to [loop above] (a9); 
\draw (a10) to [loop above] (a10); 
\end{tikzpicture}
\medskip\hfil\begin{tikzpicture}[scale=0.7]
\node[fill] (a1) at (90:0) {};
\node (a2) at (90:1) {};
\node[fill] (a3) at (90:2) {};
\node (a4) at (210:1) {};
\node[fill] (a5) at (210:2) {};
\node (a6) at (330:1) {};
\node[fill] (a7) at (330:2) {};
\node (a8) at (150:1) {};
\node (a9) at (270:1) {};
\node (a10) at (30:1) {};
\draw[<-] (a3)--(a2);
\draw[<-] (a5)--(a4);
\draw[<-] (a7)--(a6);
\draw[<-] (a3)--(a8);
\draw[<-] (a5)--(a8);
\draw[<-] (a5)--(a9);
\draw[<-] (a7)--(a9);
\draw[<-] (a7)--(a10);
\draw[<-] (a3)--(a10);
\draw[<-] (a1)--(a2);
\draw[<-] (a1)--(a4);
\draw[<-] (a1)--(a6);
\end{tikzpicture}
\medskip\hfil\begin{tikzpicture}[scale=0.7]
\node[fill] (a1) at (90:0) {};
\node (a2) at (90:1) {};
\node[fill] (a3) at (90:2) {};
\node (a4) at (210:1) {};
\node[fill] (a5) at (210:2) {};
\node (a6) at (330:1) {};
\node[fill] (a7) at (330:2) {};
\node (a8) at (150:1) {};
\node (a9) at (270:1) {};
\node (a10) at (30:1) {};
\draw (a2)--(a4)--(a6)--(a2);
\draw (a2)--(a8)--(a4)--(a9)--(a6)--(a10)--(a2);
\draw (a8)--(a9)--(a10)--(a8);
\end{tikzpicture}
\medskip\hfil\begin{tikzpicture}[scale=0.7]
\node[fill] (a1) at (90:0) {};
\node (a2) at (90:1) {};
\node[fill] (a3) at (90:2) {};
\node (a4) at (210:1) {};
\node[fill] (a5) at (210:2) {};
\node (a6) at (330:1) {};
\node[fill] (a7) at (330:2) {};
\node (a8) at (150:1) {};
\node (a9) at (270:1) {};
\node (a10) at (30:1) {};
\draw[<-] (a3) to [bend right=20] (a9);
\draw[<-] (a3)--(a6);
\draw[<-] (a5) to [bend right=20] (a10);
\draw[<-] (a5)--(a2);
\draw[<-] (a7) to  [bend right=20] (a8);
\draw[<-] (a7)--(a4);
\draw[<-] (a1)--(a9);
\draw[<-] (a1)--(a10);
\draw[<-] (a1)--(a8);
\draw[<-] (a3)--(a4);
\draw[<-] (a7)--(a2);
\draw[<-] (a5)--(a6);
\end{tikzpicture}
\medskip\hfil\begin{tikzpicture}[scale=0.6]
\node[fill] (a1) at (90:0) {};
\node (a2) at (90:1) {};
\node[fill] (a3) at (90:2) {};
\node (a4) at (210:1) {};
\node[fill] (a5) at (210:2) {};
\node (a6) at (330:1) {};
\node[fill] (a7) at (330:2) {};
\node (a8) at (150:1) {};
\node (a9) at (270:1) {};
\node (a10) at (30:1) {};
\draw (a2) to [bend left=20] (a9);
\draw (a10) to [bend left=20] (a4);
\draw (a6) to [bend left=20] (a8);
\end{tikzpicture}
\medskip
\caption{Coherent configuration of rank $9$ determined by the subdivision of the complete graph $K_4$.}
\label{fig:coherent}
\end{figure}

Let $\Gamma = (X,E)$ be a distance biregular-graph with bipartition $(X',X'')$, and let $d'=\max\{d(x,y) \mid x\in X', y\in X\}$ and $d''=\max\{d(x,y) \mid x\in X'', y\in X\}$. 
For $i\geq 0$, we define the relations $R'_{i},R''_{i}$ on $X$ by
\[
R'_{i} = \{(x,y)\in X\times X \mid x\in X' ,\ d(x,y) = i\},
\]
\[
R''_{i} = \{(x,y)\in X\times X \mid x\in X'' ,\ d(x,y) = i\}.
\]
Then  $R'_{i} = \emptyset$ for $i>d'$ and $R''_{i} = \emptyset$ for $i>d''$.  
Clearly, $R'_i\cap R'_j = \emptyset$ and $R''_i\cap R''_j = \emptyset$ for all $i,j$. 
Since $X'\cap X'' = \emptyset$, then $R'_i\cap R''_j = \emptyset$ for all $i,j$. 
Moreover, 
\[
R'_0 \cup R'_1\cup \cdots \cup R'_{d'} = X' \times X,
\]
\[
R''_0 \cup R''_1\cup \cdots \cup R''_{d''} = X'' \times X.
\]
Since $X'\cup X'' = X$, then $\{R'_0,\dots,R'_{d'},R''_0,\dots,R''_{d''}\}$ is a partition of $X\times X$. 
Furthermore, $R'_0 =\{(x,x) \mid x\in X'\}$ and $R''_0 = \{(x,x) \mid x\in X''\}$,  and so $R'_0 \cup R''_0 = \{(x,x) \mid x\in X\}$. 
Since $\Gamma$ is bipartite, then $R'_{2i}$ and $R''_{2i}$ are symmetric, while $(x,y) \in R'_{2i+1}$ if and only if $(y,x)\in R''_{2i+1}$ for all $i$.  
In particular, $R'_1 \cup R''_1 = E$. 
Finally, since $\Gamma$ is distance-biregular, there exist numbers $p'_{ij}(k)$ and $p''_{ij}(k)$ such that for all $(x,y)$ with $d(x,y)=k$, we have
\[
|\Gamma_i(x)\cap \Gamma_j(y)| = 
\begin{cases}
    p'_{ij}(k)  & \textnormal{if $x\in X'$},\\
    p''_{ij}(k) & \textnormal{if $x\in X''$}
\end{cases}
\]
(see, e.g. \cite[Lemma 3.10]{A}). 
Therefore, $\Pi_{\Gamma} = \{R'_0,\dots,R'_{d'},R''_0,\dots,R''_{d''}\}$ is the coherent configuration of $\Gamma$. 

The intersection numbers $p'_{ij}(k)$ and $p''_{ij}(k)$ of $\Pi_\Gamma$ can be computed recursively in terms of the intersection arrays of the distance-biregular graph $\Gamma$ (see \cite[Lemma 6.3.1]{L2}).

\begin{lemma}[\cite{L2}]\label{lem:int}
\[
p'_{i0}(k) = \delta_{ik}, \qquad p'_{0j}(k)  = \delta_{jk},
\]
\[
p'_{i1}(k) =
\begin{cases}
    c'_i & \textnormal{if $k = i+1$},\\
    b'_i & \textnormal{if $k = i-1$},\\
    0 & \textnormal{otherwise}.
\end{cases} 
\] 
\[
p'_{ij+1}(2k)
= \frac{1}{c'_{j+1}} 
[p'_{i-1 j}(2k)b'_{i-1}  + p'_{i+1j}(2k)c'_{i+1} - p'_{ij-1}(2k) b'_{j-1}
],
\]
\[
p'_{ij+1}(2k+1) 
= \frac{1}{c''_{j+1}} [p'_{i-1 j}(2k+1)b'_{i-1} + p'_{i+1j}(2k+1)c'_{i+1} - p'_{ij-1}(2k+1) b''_{j-1}].
\]
Similarly,
\[
p''_{i0}(k) = \delta_{ik}, \qquad p''_{0j}(k)  = \delta_{jk},
\]
\[
p''_{i1}(k) =
\begin{cases}
    c''_i & \textnormal{if $k = i+1$},\\
    b''_i & \textnormal{if $k = i-1$},\\
    0 & \textnormal{otherwise}.
\end{cases} 
\] 
\[
p''_{ij+1}(2k)
= \frac{1}{c''_{j+1}} 
[p''_{i-1 j}(2k)b''_{i-1}  + p''_{i+1j}(2k)c''_{i+1} - p''_{ij-1}(2k) b''_{j-1}
],
\]
\[
p''_{ij+1}(2k+1) 
= \frac{1}{c'_{j+1}} [p''_{i-1 j}(2k+1)b''_{i-1} + p''_{i+1j}(2k+1)c''_{i+1} - p''_{ij-1}(2k+1) b'_{j-1}].
\]
\end{lemma}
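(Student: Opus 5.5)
The plan is to derive the recursion by double counting edges between two distance layers, in the same way as the classical proof for distance-regular graphs (\cite[Lemma 4.1.7]{BCN}). The only new ingredient is keeping track of which class of the bipartition each base vertex lies in. Throughout, fix $(x,y)$ with $d(x,y)=k$, and write $p_{ij}=|\Gamma_i(x)\cap\Gamma_j(y)|$.

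\textbf{Base cases.} The formulas for $p_{i0}(k)$ and $p_{0j}(k)$ are immediate, since $\Gamma_0(y)=\{y\}$ and $\Gamma_0(x)=\{x\}$. For $p_{i1}(k)$, the set $\Gamma_1(y)\cap\Gamma_i(x)$ is exactly what the definitions of $c_k(x),a_k(x),b_k(x)$ count, with base vertex $x$:
\begin{itemize}
\item it has $c_k(x)$ elements if $i=k-1$;
\item it has $a_k(x)=0$ elements if $i=k$, because $\Gamma$ is bipartite;
\item it has $b_k(x)$ elements if $i=k+1$.
\end{itemize}
This gives the stated $p'_{i1}$ when $x\in X'$ and the stated $p''_{i1}$ when $x\in X''$.

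\textbf{Main identity.} I would count the set
\[
N=\{(z,w) : z\in\Gamma_i(x),\ w\in\Gamma_j(y),\ z\sim w\}
\]
in two ways.

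\emph{Summing over $w$.} For $w\in\Gamma_j(y)\cap\Gamma_m(x)$, the number of neighbours of $w$ in $\Gamma_i(x)$ depends only on $m$ and on the array of the base vertex $x$. It is $b_{i-1}(x)$ if $m=i-1$ and $c_{i+1}(x)$ if $m=i+1$; the case $m=i$ contributes $a_i(x)=0$. Hence
\[
|N| = p_{i-1,j}\,b_{i-1}(x)+p_{i+1,j}\,c_{i+1}(x).
\]

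\emph{Summing over $z$.} For $z\in\Gamma_i(x)\cap\Gamma_m(y)$, the number of neighbours of $z$ in $\Gamma_j(y)$ is computed with base vertex $y$, again using that $y$ is distance-regularized. It is $c_{j+1}(y)$ if $m=j+1$ and $b_{j-1}(y)$ if $m=j-1$. Hence
\[
|N| = p_{i,j+1}\,c_{j+1}(y)+p_{i,j-1}\,b_{j-1}(y).
\]

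Equating the two expressions and solving for $p_{i,j+1}$ gives the recursion.

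\textbf{Bookkeeping of arrays.} This is the step that needs care, though it is not deep. If $x\in X'$, the parameters attached to $x$ are $b'$ and $c'$. Since $\Gamma$ is bipartite, $y$ lies in the same class as $x$ exactly when $k=d(x,y)$ is even. So for $k$ even the $y$-parameters are $c'_{j+1}$ and $b'_{j-1}$, and for $k$ odd they are $c''_{j+1}$ and $b''_{j-1}$. These are precisely the four displayed formulas for $p'$, and the formulas for $p''$ follow by swapping the roles of $'$ and $''$.

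\textbf{Expected obstacle.} The main point to check is that the division by $c_{j+1}(y)$ is legitimate. Whenever $j+1$ does not exceed the eccentricity $d'$ or $d''$ of $y$, we have $c_{j+1}(y)\ge 1$, because every vertex at distance $j+1$ has a neighbour at distance $j$. When $j+1$ exceeds that eccentricity, $\Gamma_{j+1}(y)=\emptyset$, so $p_{i,j+1}=0$ trivially and the recursion is only asserted in the meaningful range. I would also check that the well-definedness of $p'_{ij}(k)$ and $p''_{ij}(k)$, quoted from \cite[Lemma 3.10]{A}, is consistent with this argument. In fact the double count itself proves, by induction on $j$, that $p_{ij}$ depends only on $(i,j,k)$ and the class of $x$.
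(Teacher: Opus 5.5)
Your double count is the natural self-contained argument; the paper gives no proof of its own and only cites \cite[Lemma 6.3.1]{L2}. The recursive part is sound. The identity $p_{i-1,j}\,b_{i-1}(x)+p_{i+1,j}\,c_{i+1}(x)=p_{i,j+1}\,c_{j+1}(y)+p_{i,j-1}\,b_{j-1}(y)$ holds, the $a$-terms vanish by bipartiteness, and your parity bookkeeping for the class of $y$ reproduces exactly the four displayed recursions. Your remarks on dividing by $c_{j+1}(y)$, and on well-definedness by induction on $j$, are also fine.

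The one false step is the sentence \emph{This gives the stated $p'_{i1}$}. What you actually computed is $p'_{i1}(k)=c'_k=c'_{i+1}$ when $k=i+1$, and $p'_{i1}(k)=b'_k=b'_{i-1}$ when $k=i-1$. The statement instead displays $c'_i$ and $b'_i$. Your computation is right and the displayed version is wrong, as two checks show:
\begin{itemize}
\item For $x\in X'$ one has $p'_{11}(0)=|\Gamma_1(x)|=b'_0$, whereas the display gives $b'_1$. For the subdivided $K_4$ of Figure~\ref{fig:subdivision}, $b'_0=3\neq 1=b'_1$.
\item The printed lemma is internally inconsistent: $p'_{01}(1)=1$ by the first display, but the second display gives $c'_0=0$.
\end{itemize}
Your own identity at $j=0$ (where $c_1(y)=1$) gives $p'_{i1}(k)=\delta_{i-1,k}\,b'_{i-1}+\delta_{i+1,k}\,c'_{i+1}$, which confirms the index shift.

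So you should not assert agreement with the printed base case. Instead, flag the off-by-one slip; the same slip appears in the distance-regular display quoted from \cite{BCN} just before. Then state that what you prove is the corrected base case, with $c'_{i+1},b'_{i-1}$ and $c''_{i+1},b''_{i-1}$ in place of $c'_i,b'_i$ and $c''_i,b''_i$.

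The slip does not affect the paper's use of the lemma in Theorem~\ref{thm:cospec}. That argument only needs every $p'_{ij}(k)$ and $p''_{ij}(k)$ to be determined by $\iota'$ and $\iota''$, which is exactly what your induction establishes.
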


Therefore, if two distance-biregular graphs have the same intersection arrays, then their coherent configurations are intersection equivalent, and hence  the two graphs are $C^3$-equivalent. 

\begin{theorem} \label{thm:cospec}
Two distance-biregular graphs $\Gamma$ and $\Delta$ are cospectral if and only if they are  $C^3$-equivalent.
\end{theorem}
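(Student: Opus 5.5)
The plan is to prove the two directions separately, each by combining results already stated.

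\emph{$C^3$-equivalence implies cospectrality.} This direction holds for arbitrary graphs and is exactly Theorem~\ref{thm:C3}. No special property of distance-biregular graphs is needed.

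\emph{Cospectrality implies $C^3$-equivalence.} Suppose $\Gamma$ and $\Delta$ are cospectral distance-biregular graphs. By Corollary~\ref{cor:arrays}, which rests on Lemma~\ref{lem:semi} and Theorem~\ref{thm:sabrina}, they have the same intersection arrays $\iota'$ and $\iota''$.

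\begin{enumerate}[(1)]
\item Some care is needed about which bipartition class is labelled $X'$. If the degrees of semiregularity are swapped, Lemma~\ref{lem:semi} says $k=\bar\ell$ and $\ell=\bar k$. I will then relabel the classes of $\Delta$ so that the class of degree $k$ is $\bar X'$, and apply Theorem~\ref{thm:sabrina} to this labelling. This makes $\iota'_\Gamma=\iota'_\Delta$ and $\iota''_\Gamma=\iota''_\Delta$ with a consistent labelling, and hence $d'$ and $d''$ also agree. If $k=\ell$, both labellings give the same degree data, so either choice works.
\item By the discussion preceding Lemma~\ref{lem:int}, the coherent configuration of each graph is $\{R'_0,\dots,R'_{d'},R''_0,\dots,R''_{d''}\}$. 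The two configurations therefore have the same rank $d'+d''+2$, and the relations correspond index by index: $R'_i\leftrightarrow \bar R'_i$ and $R''_i\leftrightarrow \bar R''_i$.
\item Lemma~\ref{lem:int} expresses every intersection number $p'_{ij}(k)$ and $p''_{ij}(k)$ recursively in terms of the entries of $\iota'$ and $\iota''$ alone. Equal arrays therefore give equal numbers, by induction on $j$.
\end{enumerate}

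The main obstacle I expect is a bookkeeping point in step (3). The paper's $p'_{ij}(k)$ are indexed by distances, namely the counts $|\Gamma_i(x)\cap\Gamma_j(y)|$ for $x\in X'$. The coherent-configuration intersection numbers are instead indexed by relations, for example the number of $z$ with $(x,z)\in R'_i$ and $(z,y)\in R^{\ast}_j$. Here $R^{\ast}_j$ is $R'_j$ or $R''_j$, and which one is forced by the class of $z$: $z$ lies in $X'$ exactly when $i$ is even, since $x\in X'$.

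I will check that each relation-indexed number is either $0$ or equal to one of the $p'_{ij}(k)$ or $p''_{ij}(k)$. The value $0$ occurs precisely when the class constraints are incompatible, and which case occurs depends only on parities and on the labels $'$ and $''$. Since these parity patterns are identical for $\Gamma$ and $\Delta$ under the index correspondence of step (2), the configurations are intersection equivalent. Theorem~\ref{thm:cfi} then yields $\Gamma\equiv_{C^3}\Delta$, completing the proof.
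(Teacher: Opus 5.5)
Your proposal is correct and follows the paper's proof. $C^3$-equivalence gives cospectrality by Theorem~\ref{thm:C3}, and conversely Corollary~\ref{cor:arrays}, Lemma~\ref{lem:int} and Theorem~\ref{thm:cfi} give intersection equivalence and hence $C^3$-equivalence; your extra bookkeeping fills in details the paper leaves implicit. That bookkeeping relabels the bipartition classes when the valencies are swapped and translates the distance-indexed numbers $p'_{ij}(k)$, $p''_{ij}(k)$ into relation-indexed ones via the parity of $i$, which correctly makes the correspondence send $R'_1\cup R''_1=E$ to the edge relation of $\Delta$.
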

\begin{proof}
If $\Gamma$ and $\Delta$ are cospectral, then Corollary \ref{cor:arrays} implies that $\Gamma$ and $\Delta$ have the same intersection arrays. 
From Lemma \ref{lem:int}, it follows that the coherent configurations $\Pi_\Gamma$ and $\Pi_\Delta$ have the same intersection numbers, and so they are intersection equivalent. 
By Theorem \ref{thm:cfi}, since $\Pi_\Gamma$ and $\Pi_\Delta$ are intersection equivalent, then  $\Gamma$ and $\Delta$ are $C^3$-equivalent. 
The converse also holds, as we already had Theorem \ref{thm:C3}.
\end{proof}

For distance-regular graphs, a similar result was  proved by Man{\v{c}}inska,  Roberson and Varvitsiotis (see \cite[Theorem 7.6]{MRV}). 
To stress the connection with logical definability, we state their result as follows. 

\begin{theorem}[\cite{MRV}]\label{thm:drg}
Two distance-regular graphs are cospectral if and only if they are $C^3$-equivalent. 
\end{theorem}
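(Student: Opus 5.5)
The plan is to mirror the proof of Theorem \ref{thm:cospec}, replacing the biregular ingredients with their distance-regular counterparts. The direction from $C^3$-equivalence to cospectrality needs no new work: Theorem \ref{thm:C3} gives it for all graphs. So the substance is the direction from cospectrality to $C^3$-equivalence.

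Let $\Gamma$ and $\Delta$ be cospectral distance-regular graphs. The key step is to show that they have the same intersection array $\{b_0,\dots,b_{d-1};c_1,\dots,c_d\}$.

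First I read off basic data from the spectrum. The number of distinct eigenvalues equals $d+1$, because for a distance-regular graph the adjacency algebra spanned by the distance matrices $A_0,\dots,A_d$ has dimension $d+1$ and equals $\langle A\rangle$. The largest eigenvalue is the degree $k=b_0$, since the graph is regular. Hence $\Gamma$ and $\Delta$ share the diameter $d$ and the degree $k$.

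Next I recover the array itself. The distance matrices satisfy $A_i=v_i(A)$ for polynomials $v_i$ of degree $i$, given by the three-term recurrence
\[
Av_i(A)=b_{i-1}v_{i-1}(A)+a_iv_i(A)+c_{i+1}v_{i+1}(A).
\]
These $v_i$ are orthogonal with respect to the inner product $\langle p,q\rangle=\frac{1}{n}\operatorname{tr}(p(A)q(A))=\frac1n\sum_j m_j p(\theta_j)q(\theta_j)$, and this inner product depends only on the spectrum. Running Gram--Schmidt on $1,x,x^2,\dots$ with respect to it gives orthogonal polynomials $p_0,\dots,p_d$ determined by the spectrum alone. Each $v_i$ is a scalar multiple of $p_i$. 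I would fix the scalars from two facts: $v_i$ has positive leading coefficient $1/(c_1\cdots c_i)$, and $\langle v_i,v_i\rangle=k_i=v_i(k)$. Together these normalise each $v_i$ uniquely. Comparing coefficients in the recurrence then yields $a_i$, $b_{i-1}$ and $c_{i+1}$ for every $i$. This is the standard fact, in \cite{BCN}, that the spectrum of a distance-regular graph determines its intersection array among distance-regular graphs. I would cite it directly if permitted, and otherwise sketch the argument just given.

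With equal intersection arrays in hand, the lemma from \cite{BCN} stated before Section \ref{subsec:biregular} computes every intersection number $p_{ij}(k)$ of $\Pi_\Gamma=\{R_0,\dots,R_d\}$ recursively from the array. So $\Pi_\Gamma$ and $\Pi_\Delta$ have the same rank $d+1$ and the same intersection numbers, and are therefore intersection equivalent. Theorem \ref{thm:cfi} then gives $\Gamma\equiv_{C^3}\Delta$.

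The main obstacle is the middle step, recovering the array from the spectrum. In particular, one must check carefully that the normalisation $\langle v_i,v_i\rangle=v_i(k)$ pins down the scalar multiple of $p_i$ uniquely, including its sign. This rests on positivity of the leading coefficient, and that in turn uses $c_i>0$.
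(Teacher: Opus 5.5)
Your argument is correct and is exactly the paper's proof of Theorem~\ref{thm:cospec} transplanted to the distance-regular case: the spectrum determines the intersection array, the lemma from \cite{BCN} gives the intersection numbers, and Theorem~\ref{thm:cfi} finishes; the paper itself gives no proof of this statement and simply cites \cite{MRV}. The obstacle you flag is not real: writing $v_i=\lambda_i p_i$ with $\lambda_i\neq 0$, the identity $\langle v_i,v_i\rangle=v_i(k)$ alone gives $\lambda_i^2\langle p_i,p_i\rangle=\lambda_i p_i(k)$, hence $\lambda_i=p_i(k)/\langle p_i,p_i\rangle$, so no sign or positivity argument is needed.
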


As we mentioned before, Godsil and Shawe-Taylor proved in \cite{GS} that there are no other distance-regularized graphs apart from the ones which are distance-regular or distance-biregular. 

\begin{theorem}[\cite{GS}]
    A distance-regularized graph is either distance-regular or distance-biregular.
\end{theorem}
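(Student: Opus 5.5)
The statement is the theorem of Godsil and Shawe-Taylor, and the plan is to reprove it directly from the definition of distance-regularized vertices. It rests on a path-counting identity. Fix vertices $x,z$ with $d(x,z)=i$. The number of geodesics from $x$ to $z$ can be counted by walking backwards from $z$: a geodesic $z=z_i,z_{i-1},\dots,z_0=x$ has $z_{j-1}\in\Gamma_{j-1}(x)\cap\Gamma_1(z_j)$. Since $x$ is distance-regularized, this gives $c_1(x)c_2(x)\cdots c_i(x)$ geodesics. Counting the same geodesics from the other end, using that $z$ is distance-regularized, gives $c_1(z)\cdots c_i(z)$. Hence
\[
c_1(x)\cdots c_i(x)=c_1(z)\cdots c_i(z)\qquad\text{whenever } d(x,z)=i .
\]
Applying this along a geodesic to intermediate vertices (distance $i-1$, then $i$) and inducting on $i$, I would deduce that $c_j(x)=c_j(z)$ for all $j\le d(x,z)$.

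In particular, for adjacent $x,y$ we get $c_1(x)=c_1(y)=1$. I would then push further, taking $z$ at distance $\ge j$ from both $x$ and $y$, to show that the $c$-sequences of any two vertices agree on their common range. This is consistent with the example of the subdivided $K_4$, whose arrays have $c$-parts $1,1,2$ and $1,1,2,2$. The sphere sizes are then determined by the arrays:
\[
k_i(x)=\frac{b_0(x)\cdots b_{i-1}(x)}{c_1(x)\cdots c_i(x)} .
\]

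The next step is to compare the degrees $k(x)=b_0(x)$ across an edge $xy$. Take $z$ with $d(x,z)=i$ and $d(y,z)=i+1$. Counting edges between $\Gamma_i(x)$ and $\Gamma_{i+1}(x)$ inside the ball structure seen from $y$, together with $b_i(x)=k(x)-c_i-a_i(x)$, should give linear relations between $a_i(x),a_i(y),k(x),k(y)$. I would split into two cases.

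\textbf{Case 1: some edge $xy$ has $k(x)=k(y)$.} The relations should force $a_i(x)=a_i(y)$ for all $i$. By connectivity every vertex then has the same array, so $\Gamma$ is distance-regular.

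\textbf{Case 2: $k(x)\neq k(y)$ on every edge.} Here I would show that $a_1(x)=0$ for all $x$, and more generally $a_i(x)=0$ for all $i$. The intended argument is that a vertex $w\in\Gamma_i(x)$ adjacent to another vertex of $\Gamma_i(x)$ would yield two geodesic-count expressions whose $k$-factors are incompatible. Once all $a_i$ vanish, there are no edges inside any sphere $\Gamma_i(x)$, which means $\Gamma$ has no odd cycles, so $\Gamma$ is bipartite. Distance-regularity then makes $k$ constant on each colour class: neighbours of neighbours have equal degree via the $k_2$ formula and $c_2$-equality. Hence all vertices in one class share the same array, and $\Gamma$ is distance-biregular.

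The main obstacle is the dichotomy itself: proving that either the degree is constant on edges or no sphere contains an edge. This is where I expect the real work. My first attempt would be to count the pairs $(w,w')$ with $w\in\Gamma_i(x)$, $w'\in\Gamma_i(y)$ and $ww'\in E$ in two ways, once via $x$'s array and once via $y$'s. Using the equality of the $c_j$ established above, this should produce an identity of the form $(k(x)-k(y))\,a_i(\cdot)=0$, which is exactly the dichotomy needed.
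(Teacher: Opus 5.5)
\textbf{There is a genuine gap: the proposal's first substantive step is false, and the core claim (equal intersection arrays) is never argued.} The paper gives no proof of this statement; it quotes Godsil and Shawe-Taylor. So your argument has to stand on its own.

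\textbf{The $c$-equality lemma is false.} Your geodesic count $c_1(x)\cdots c_i(x)=c_1(z)\cdots c_i(z)$ for $d(x,z)=i$ is correct. The induction to $c_j(x)=c_j(z)$ does not close, though: across an edge the identity only yields $c_1=c_1$, so nothing carries $c_j$ for $j\ge 2$ from one endpoint to the other. The conclusion itself fails. Take the point--plane incidence graph of $AG(3,2)$: 8 points of degree $7$ and 14 planes of degree $4$. It is distance-biregular with $c$-sequences $(1,3,4)$ for points and $(1,2,6,4)$ for planes. A point $x$ and a plane $P\not\ni x$ are at distance $3$, yet $c_2(x)=3\neq 2=c_2(P)$; only the products $1\cdot 3\cdot 4=1\cdot 2\cdot 6$ agree. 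Your stronger claim, that the $c$-sequences of any two vertices agree on their common range, already fails for $K_{2,3}$, whose two classes have $c$-sequences $(1,3)$ and $(1,2)$. Your closing double count explicitly uses this equality, so the plan rests on a false lemma.

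\textbf{The difficulty is in the wrong place.} The degree dichotomy is elementary. Every vertex of $\Gamma_i(x)$ has degree $c_i(x)+a_i(x)+b_i(x)$, so all neighbours of any vertex share a degree. Hence vertices joined by a walk of even length have equal degree. So $\Gamma$ is regular unless it is bipartite, in which case it is biregular. This also justifies your Case~1 step ``by connectivity'', which does not follow from a single edge with equal end-degrees.

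\textbf{The real content is never argued:} that all vertices (regular case), or all vertices of one class (bipartite case), have the same $a_i$ and $c_i$ for every $i$.
\begin{itemize}
\item ``The relations should force $a_i(x)=a_i(y)$'' has no mechanism behind it.
\item An identity of the form $(k(x)-k(y))a_i=0$ is vacuous exactly in Case~1, where you need it.
\item Your pair count involves the sizes $|\Gamma_i(x)\cap\Gamma_j(y)|$, which are not known to be functions of $\iota(x),\iota(y)$ before the theorem is proved.
\item In Case~2, constant degree on a colour class does not give equal arrays.
\end{itemize}

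\textbf{One way to close the gap.}
\begin{enumerate}
\item Since the distance partition of $u$ is equitable, $A^{m}e_u$ is constant on each $\Gamma_i(u)$.
\item So for adjacent $u,w$ you get $(A^k)_{uu}=\deg(u)\,(A^{k-1})_{wu}$ and $(A^k)_{ww}=\deg(w)\,(A^{k-1})_{uw}$. By symmetry of $A^{k-1}$, $(A^k)_{uu}/\deg(u)=(A^k)_{ww}/\deg(w)$ for all $k\ge 1$.
\item Combined with the degree observation, closed-walk counts are then constant on the whole graph (non-bipartite case) or on each colour class (bipartite case).
\item Finally, $(A^k)_{uu}$ is the $(0,0)$ entry of the $k$-th power of the tridiagonal quotient matrix of $u$'s distance partition. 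These entries determine $a_i(u)$ and $b_i(u)c_{i+1}(u)$ inductively in $i$.
\item Together with the known degrees $a_i+b_i+c_i$, this recovers $\iota(u)$ recursively, giving distance-regularity or distance-biregularity.
\end{enumerate}
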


Thus the next result follows immediately. 

\begin{theorem}
    Two distance-regularized graphs are cospectral if and only if they are $C^3$-equivalent. 
\end{theorem}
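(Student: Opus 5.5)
The forward direction needs no new work. If two distance-regularized graphs are $C^3$-equivalent, they are cospectral by Theorem~\ref{thm:C3}, which holds for arbitrary graphs. The content is therefore the converse: cospectral distance-regularized graphs $\Gamma$ and $\Delta$ are $C^3$-equivalent.

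By the theorem of Godsil and Shawe-Taylor, each of $\Gamma,\Delta$ is distance-regular or distance-biregular. If both are distance-regular, Theorem~\ref{thm:drg} gives $\Gamma\equiv_{C^3}\Delta$. If both are distance-biregular, Theorem~\ref{thm:cospec} gives the same conclusion. The only case needing an argument is the mixed one, where, say, $\Gamma$ is distance-regular and $\Delta$ is distance-biregular but not distance-regular.

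I would reduce the mixed case to the biregular case using the fact that bipartiteness of a connected graph is determined by its spectrum. Let $\Gamma$ be distance-regular and $\Delta$ distance-biregular, and let $\theta_0$ be the largest eigenvalue of their common spectrum. Since $\Delta$ is bipartite, $-\theta_0$ is an eigenvalue of $\Delta$; one can also read this off the proof of Lemma~\ref{lem:semi}, where $\pm\sqrt{k\ell}$ are shown to be eigenvalues. By cospectrality, $-\theta_0$ is then an eigenvalue of $\Gamma$. Since $\Gamma$ is connected, the Perron--Frobenius theorem says that $-\theta_0$ is an eigenvalue of its adjacency matrix only when $\Gamma$ is bipartite. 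Thus $\Gamma$ is a bipartite distance-regular graph.

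Such a graph is distance-biregular according to the definition in Section~\ref{subsec:biregular}: it is bipartite and distance-regularized, and all its vertices, in particular those within each class of the bipartition, share one intersection array; here $k=\ell$. Hence $\Gamma$ and $\Delta$ are both distance-biregular and cospectral, and Theorem~\ref{thm:cospec} yields $\Gamma\equiv_{C^3}\Delta$. Lemma~\ref{lem:semi} together with Corollary~\ref{cor:arrays} would in fact force $\Delta$ to be regular with the same array as $\Gamma$, so the mixed case cannot really occur, but this is not needed.

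The main obstacle is this mixed case, and it depends on two points. The first is the Perron--Frobenius characterization of connected bipartite graphs by the symmetric pair $\pm\theta_0$, which I will quote as standard. The second is checking that Theorem~\ref{thm:cospec} and the supporting Lemma~\ref{lem:int} and Theorem~\ref{thm:sabrina} apply to bipartite distance-regular graphs viewed as distance-biregular with $k=\ell$. The definitions given in the paper cover this, so no further computation should be required.
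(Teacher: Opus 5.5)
Your proof is correct and follows the paper's route: the easy direction comes from Theorem~\ref{thm:C3}, and the converse from the Godsil--Shawe-Taylor dichotomy together with Theorem~\ref{thm:drg} and Theorem~\ref{thm:cospec}. The paper only says the result ``follows immediately'', so your explicit treatment of the mixed case fills in the one step it leaves unstated: by Perron--Frobenius, a connected distance-regular graph cospectral with a bipartite graph must itself be bipartite, and hence is distance-biregular in the paper's sense.
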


\section*{Acknowledgments}
This work started during a Dagstuhl Seminar on Logic and Random Discrete Structures, Germany, in February 2022. The authors would like to thank the organizers of the workshop. A preliminary version of this work appeared in the proceedings of the XII Latin-American Algorithms, Graphs and Optimization Symposium (LAGOS 2023).  

Aida Abiad is supported by the Dutch Research Council through the grant VI.Vidi.213.085. The research of Octavio Zapata is partially supported by the Mexican Research Council through the grant SNII 620178.

\end{document}